\tikzset{snake it/.style={decorate, decoration=snake}}
\numberwithin{equation}{section} 
\newcommand\ds{\displaystyle}
\def\on{{\bf 1}}
\def\O{\Omega}
\def\o{\omega}
\def\G1{{\bf 1}}
\def\S{{\GS}}
\def\d{\delta}
\def\e{\varepsilon}
\def\fu{\frak{u}}
\def\wo{\overline}
\def\C{\mathcal{C}}
\def\BE{{\bf E}}
\def\Te{{\cal T}_\e}
\definecolor{navyblue}{rgb}{0.0, 0.0, 0.5}
\def\Ga{{\bf a}}
\def\Gb{{\bf b}}
\def\Ge{{\bf e}}
\def\ds{\displaystyle}
\def\N{{\mathbb{N}}}
\def\R{{\mathbb{R}}}
\def\Z{{\mathbb{Z}}}
\def\D{{\mathbb{D}}}
\def\R{{\mathbb{R}}}
\def\N{{\mathbb{N}}}
\def\Z{{\mathbb{Z}}}
\def\Ec{{\mathcal{E}}}
\def\Pc{{\mathcal{P}}}
\def\Kc{{\mathcal{K}}}
\def\Uc{{\mathcal{U}}}
\def\Rc{{\mathcal{R}}}
\def\Vc{{\mathcal{V}}}
\def\Yc{{\mathcal{Y}}}
\def\Pc{{\mathcal{P}}}
\def\Wc{{\mathcal{W}}}
\def\Mc{{\mathcal{M}}}
\def\wh{\widehat }
\def\wt{\widetilde }
\def\X{\times }
\def\Ga{{\bf a}}
\def\Gb{{\bf b}}
\def\Ge{{\bf e}}
\def\Gu{{\bf u}}
\def\Gv{{\bf v}}
\def\Gw{{\bf w}}
\def\GA{{\bf A}}
\def\GB{{\bf B}}
\def\GC{{\bf C}}
\def\GD{{\bf D}}
\def\GF{{\bf F}}
\def\GG{{\bf G}}
\def\GH{{\bf H}}
\def\GI{{\bf I}}
\def\GM{{\bf M}}
\def\GS{{\bf S}}
\def\GU{{\bf U}}
\def\GF{{\bf F}}
\def\GV{{\bf V}}
\definecolor{skyblue}{RGB}{135,206,235}
\definecolor{deepskyblue}{RGB}{0, 191, 255}
\newtheorem{theorem}{Theorem}
\newtheorem{definition}{Definition}
\newtheorem{remark}{Remark}
\newtheorem{lemma}{Lemma}
\begin{document}


\title{Mathematical modelling and homogenization of thin fiber-reinforced hydrogels}
	
	
	%
	\author{Amartya Chakrabortty, Haradhan Dutta, Hari Shankar Mahato}
	
	\maketitle
	{\bf Keywords:}
	Homogenization, dimension reduction, Biot poroelasticity, linear elasticity, unfolding operator, decomposition methods.\\
	
	{\bf { Mathematics Subject Classification (2010):} 35B27, 35B40, 35C20, 74Q15, 74S20, 74A15.}
	{\let\thefootnote\relax\footnotetext{
			\noindent Amartya Chakrabortty: SMS, Fraunhofer ITWM, Kaiserslautern 67663, Germany
            
            Email: amartya.chakrabortty@itwm.fraunhofer.de\\
            
		Haradhan Dutta, Hari Shankar Mahato: Department of Mathematics, IIT Kharagpur, India
        
		Emails: hdutta1412@kgpian.iitkgp.ac.in, hsmahato@maths.iitkgp.ac.in\\
            
            Corresponding author email: amartya.chakrabortty@itwm.fraunhofer.de
            }}







\begin{abstract}
This work considers simultaneous homogenization and dimension reduction of a poroelastic model for thin fiber-reinforced hydrogels. The analysed medium is a two-component system consisting of a continuous fiber framework with hydrogel inclusions arranged periodically throughout the medium. The fibers are assumed to operate under quasi-stationary linear elasticity, whereas the hydrogel's hydromechanical behavior is represented using Biot's linear poroelasticity model. The asymptotic limit of the coupled system is established when the periodicity and thickness parameters are of the same order and tend to zero simultaneously, utilizing the re-scaling unfolding operator. It is demonstrated that the limit displacement exhibits Kirchhoff-Love-type behavior using the decomposition of plate displacements. Towards the end, a unique solution for the macroscopic problem has been demonstrated.
\end{abstract}


	
\section{Introduction}
 Fiber-reinforced hydrogels (FIH) are gaining prominence due to their biocompatibility and ability to mimic the mechanical properties of biological tissues. They are being explored for applications in tissue engineering \cite{FIH1}, drug delivery systems, and regenerative medicine \cite{FIH2}. The fibrous reinforcement provides the necessary mechanical support for cell growth and differentiation, while the hydrogel matrix facilitates nutrient transport and waste removal. This combination is particularly advantageous in developing scaffolds for soft tissue repair, where the mechanical properties must closely match those of the natural tissue to ensure successful integration and functionality.

Hydrogels are made up of polymer chains saturated with water, and they have complex mechanical properties. A substantial amount of previous research exists concerning the modeling of hydrogels, with a few notable studies highlighted here, cf. \cite{EMHydrogel, Chen1, Castilho1, SM02, SMPB23} and references therein.  In \cite{Castilho1}, hydrogels are modeled as hyper-elastic materials (Neo-Hookean solids) and in \cite{Mix01}, hydrogels are modeled as a mixture governed by Mixture theories. In the present work, it is modeled as in \cite{Chen1}, \cite{EMHydrogel} and \cite{Same1} using Biot-linear poroelasticity, which accounts for the influence of pore fluid diffusion on the quasi-static displacement of the porous media. Biot’s theory treats the fluid-solid mixture as a single homogenized continuum body, and it can be deduced as a special case of the theory of linear chemo-elasticity discussed in Chapter 15 of \cite{Anand01}. It is also experimentally seen in \cite{TvEBiot} that Biot's law,  which allows for a mass flux of the fluid, and not as a multi-component mixture as in the “theory of mixtures” (see \cite{Mix01}), seems to give more accurate properties of hydrogel. To the best of our knowledge, a mathematical analysis of the effective material properties of hydrogels does not exist.

In general, a FIH consists of a periodic fiber scaffold with the inclusion of hydrogels. Usually, both the fiber and hydrogel parts are connected (see Figure 1 in \cite{Castilho1} for an electron microscopy image of a printed fiber-scaffold and hydrogel). In this paper, as in \cite{Chen1}, \cite{SMPB23} and \cite{EMHydrogel}, we assume that the fiber-scaffold is closed, in particular, the fiber-scaffold is connected, but the hydrogel part is disconnected, and they interact with each other via the fiber-scaffold\footnote{In the Conclusion and outlook section we comment about the case when both the hydrogel and fiber part is connected.}.

In practice, the filament spacing of the scaffold is usually in the range of $\mu$m while the overall size of an FIH is in the range of mm to cm. Due to this scale heterogeneity, the effective hydro-mechanical properties of a FIH is not yet fully understood, and as a consequence, there is an interest in finding the effective global properties from its microstructure using homogenization. Furthermore, in reality, the fiber scaffold is very thin. So, it is interesting to consider an additional parameter for thickness along with the individual cell size.\\[1mm]
In this study, we present a framework for simultaneous homogenization and dimension reduction of a thin FIH in the context of quasi-stationary linear elasticity coupled with Biot's linear poroelasticity. The domain under consideration is a periodic connected plate $\O^f_\e$ made of linear elastic fibers with holes filled by hydrogels $\O^g_\e$, modelled via Biot's linear poroleasticity (See Fig \ref{Fig01}), with each periodicity cell isometric to $(0,\e)^2\X (-\e,\e)$. Using the unfolding operators, the homogenized (limit) system is obtained and is shown to be of Biot-Kirchhoff-Love poroelastic plate type.\\[1mm]
 An effective model addressing the two-phase elastic poroelastic micro-scale problem is derived in \cite{Chen1} through formal asymptotic expansion. At the macroscale, i.e., the domain without any heterogeneities, recently several works have been done for poro-elastic and poro-visco-elastic, e.g., \cite{BS22, BGSP22, BW21, BGSW16, BMW23}.\\[1mm]
For simultaneous homogenization dimension reduction, re-scaling of the unfolding operator is used, which is a variation of the periodic unfolding operator, introduced in \cite{CDG2} and further developed in \cite{CDG1}. For a more comprehensive understanding of homogenization techniques, please refer to \cite{tartar1}, \cite{asymptotic1}, \cite{Homogenization1},  and \cite{twoscale1}. Moreover, for further literature on homogenization and dimension reduction in the linear regime, consult \cite{MGahn01}, \cite{Larysa01}, \cite{GJMContact}, \cite{hauck2}  and \cite{GOW}. The approach presented in this paper, which involves an extension operator and simultaneous homogenization dimension reduction using the re-scaling unfolding operator, has similarities to the approach presented in \cite{Amartya}. The key tool used to derive the exact limit homogenized problem and to show that the limit displacement is of Kirchhoff-Love type is Griso's decomposition of plate displacement. This kind of decomposition for displacements was first introduced in \cite{BGRod1} and further developed in \cite{GKL} and \cite{GrisoRodB}. For general references on elasticity, see \cite{elasticityO}--\cite{elasticity} and mathematical modelling of Biot linear poroelasticity, see \cite{Anand01}.\\[1mm]
The Barenblatt-Biot model's mathematical formulation and analysis are explored in \cite{SM02}. The model in this context is similar to the micro-scale thin model. The reduced model for Biot poroelasticity is provided in \cite{Mikelic01}, utilizing dimension reduction, and it is demonstrated that the Biot-Kirchhoff-Love poroelastic plate model is obtained as the limit model. The present work generalizes this result through simultaneous homogenization dimension reduction of a highly heterogeneous coupled media. Additionally, in \cite{MGahnBiot} and \cite{Igor1}, a simultaneous homogenization dimension reduction of the quasi-static Stokes equation coupled with a linear elastic solid is presented, specifically addressing cases where the thickness and period are of the same order, as well as scenarios where the period is smaller than the thickness, respectively.\\[1mm]
In \cite{EMHydrogel}, the authors have considered a microscopic model analogous to that of the current work in $\R^2$ or $\R^3$, excluding the thickness parameter. They derived an effective macroscopic model utilizing the two-scale convergence method for at least a subsequence. The existence of a unique solution with $L^2$ regularity in time for the microscopic problems was demonstrated using the fixed-point theorem for monotone operators. In the present work, the Galerkin method is employed to establish the existence of a unique solution with $H^1$ regularity in time for the microscopic problem. The inclusion of the thickness parameter necessitates a comprehensive reassessment of the analysis, beginning with the derivation of estimates. Ultimately, the limit homogenized problem is derived, and the existence of a unique solution is provided.\\[1mm]
The paper is organized as follows: General notations are introduced in Section \ref{N-1}. Subsequently, Section \ref{Sec03} presents a description of the domain and the coupled system in both strong and weak forms. The main results are summarized in Section \ref{Sec04}. Preliminary results regarding the extension and decomposition of plate displacement are provided in Section \ref{Sec05}. In Section \ref{Sec06}, the existence of the microscopic problem and a priori estimates are derived. An asymptotic analysis of the microscopic fields is conducted in Section \ref{Sec07}, and finally, the macroscopic problem and the existence of its solution are demonstrated in Section \ref{Sec08}.
\section{General notations} \label{N-1}
Throughout the paper, the following notation will be used:
\begin{itemize}
	\item  $\e\in \R^+$ are small parameters;
	\item $\Yc:=Y\times (-1,1 )$ be the reference cell of the structure, where $Y=(0,1)^2\subset \R^2$ bounded domain;
	\item $x=(x',x_3)=(x_1,x_2,x_3)\in \R^3$, $\ds \partial_i\doteq\frac{\partial}{\partial x_i}$ denote the partial derivative w.r. to $x_i$, $i\in \{1,2,3\}$;
	\item the mapping $u:\O_\e\to\R^3$ denotes the displacement arising in response to forces. The gradient of displacement is denoted by $\nabla u$, with $ \nabla u\in \GM_3$ for every $x\in \O_\e$ where $\GM_3$ is the space of $3\times 3$  real matrices;
	\item  for a.e. $x'\in\R^2$, 
	$$ x'=\left(\e\left[{x'\over \e}\right]+\e\left\{{x'\over\e}\right\}\right),\quad \text{where}\;\; \left[{x'\over \e}\right]\in\Z^2,\;\;\left\{{x'\over\e}\right\}\in Y;$$
	\item  $|\cdot|_F$ denote the Frobenius norm on $\R^{3\X3}$;
	\item  for every displacement $u\in H^1(\O_\e)^3$, the linearized strain tensor is given by
	$$e(u)={1\over 2}\big(\nabla u+(\nabla u)^T\big)\quad \text{and}\quad e_{ij}(u)={1\over 2}\left({\partial u_i\over \partial x_j}+{\partial u_j\over \partial x_i}\right)$$
	where $i,j\in \{1,2,3\}$;
	\item 	for any $\psi\in H^1(\Yc)^3$, (resp. $L^2\big(\o; H^1(\Yc)\big)^3$) we denote $e_y(\psi)$ the $3\X 3$ symmetric matrix whose entries are
	$$e_{y,ij}(\psi)={1\over 2}\left({\partial \psi_i\over \partial y_j}+{\partial \psi_j\over \partial y_i}\right);$$
	\item $(\alpha,\beta)\in \{1,2\}^2$, $(i,j,k,l)\in \{1,2,3\}^4$ and $a\in\{f,g\}$ (if not specified).
    \item  By convention in estimates we simply write $L^2(\O_{\e})$ instead of $L^2(\O_{\e})^3$ or $L^2(\O_{\e})^{3\times 3}$, we write the complete space when we give weak or strong convergence.
\end{itemize}
{ 	In this paper we use the Einstein convention of summation over repeated indices and denote by $C$ a generic constant independent of $\e$. These are some of the general notations used in this paper, notations which are not defined here are defined in the main content of the paper.}

\section{Problem setup and domain description}\label{Sec03}
\subsection{Domain description}
The whole system of the fiber-reinforced hydrogels is a periodic perforated plate with holes filled with hydrogels. It is described as follows:
 \begin{itemize}
 	\item Let $\o\subset \R^2$ be a bounded domain with Lipschitz boundary.  Set 
 	$$\Kc_\e=\{k\in\Z^2\,|\,\e k+\e Y\subset \o\},\quad \o_\e=\text{interior}\left\{\bigcup_{k\in\Kc_\e}\e(k+\wo{Y})\right\},$$
 	where the set $(\o\setminus\overline{\o_\e})$ contains the part of the cells intersecting the boundary $\partial\o$. Observe that since $\o$ is bounded with Lipschitz boundary $|(\o\setminus\overline{\o_\e})|\to0$ as $\e\to0$. So, the behavior of limit fields comes from $\o_\e$.
 	\item Let $Y^f\subset Y$ is an bounded domain with Lipschitz boundary such that interior$\big({\wo Y^f}\cup({\wo Y^f}+\Ge_\alpha)\big)$ is connected for $\alpha=1,2$ and $Y^g=Y\setminus {\wo Y^f}$.
 	\item $\Yc^f$ and $\Yc^g$ are the reference cell of the fiber part and the gel part respectively, where $\Yc^a=Y^a\X(-1,1)$ for $a\in\{f,g\}$.
 	\item $\Yc=Y\X(-1,1)$ is the reference cell of the whole system. Observe that $\Yc=\Yc^f\cup \Yc^g\cup \Gamma$, where $\Gamma=\gamma\X(-1,1)=({\wo Y^f}\cap{\wo Y^g})\X(-1,1)$.
 	\item We define 
 	$$\o_\e^g=\bigcup_{k\in\Kc_\e}\e(Y^g+k),\quad \o_\e^f=\o\setminus \overline{\o^g_\e}\quad \text{and}\quad \gamma_\e=\bigcup_{k\in\Kc_\e}\e(\gamma+k).$$
 	\item So, the whole system is denoted by $\O_\e=\o\X(-\e,\e)$ with the fiber and gel part is denoted by
 	$$\O^a_\e=\o_\e^a\X(-\e,\e)\quad \text{and}\quad \Gamma_\e=\gamma_\e\X(-\e,\e).$$ 
 	\item Denote $\Lambda_{0,\e}=\partial\o\X(-\e,\e)$. We can replace $\partial\o$ by $\Lambda\subset \partial\o$ with non-zero measure.
 	\item   We set $n_\e=n_\e(x)$, $x\in\Gamma_\e$ the normal vector of $\Gamma_\e$ pointing outwards of $\O^g_\e$.
 	\item $\on^a_\e:\O_\e\to\{0,1\}$ for $a\in\{f,g\}$ denote the characteristics functions corresponding to $\O_\e^a$.
 	\item $(0,T)$ where $T>0$ represents the time interval.
 \end{itemize}
Observe that $\O^f_\e$ is connected and $\O^g_\e$ is disconnected.\\
\begin{figure}[ht]
    \centering
    \subfigure[$2$D-view of a cell and the domain]{
\begin{tikzpicture}[scale=0.8]

\fill[black!30, opacity=0.5] (0,0) rectangle (4,4); 
\draw[thick] (0,0) rectangle (4,4); 
\fill[navyblue] (1,1) rectangle (3,3); 

\begin{scope}[shift={(5,0)}] 
    \fill[black!30, opacity=0.5] (0,0) rectangle (4,4); 
    \draw[thick] (0,0) rectangle (4,4); 

    \foreach \x in {0,1,2,3} { 
        \foreach \y in {0,1,2,3} { 
            \fill[navyblue] (\x+0.2,\y+0.2) rectangle (\x+0.8,\y+0.8); 
        }
    }
\end{scope}
\end{tikzpicture}}
\hspace{2.5cm}
 \subfigure[$3$D-view of a cell]{
 \begin{tikzpicture}[scale=0.80]
    \fill[black!20] (1.5, 1.5, 1.5) -- (1.5, -1.5, 1.5) -- (-1.5, -1.5, 1.5) -- (-1.5, 1.5, 1.5) -- cycle; 
    \fill[black!20] (1.5, 1.5, -1.5) -- (1.5, -1.5, -1.5) -- (-1.5, -1.5, -1.5) -- (-1.5, 1.5, -1.5) -- cycle; 
    \fill[black!20] (1.5, 1.5, 1.5) -- (1.5, 1.5, -1.5) -- (1.5, -1.5, -1.5) -- (1.5, -1.5, 1.5) -- cycle; 
    \fill[black!20] (-1.5, 1.5, 1.5) -- (-1.5, 1.5, -1.5) -- (-1.5, -1.5, -1.5) -- (-1.5, -1.5, 1.5) -- cycle; 
    \fill[black!20] (1.5, 1.5, 1.5) -- (1.5, -1.5, 1.5) -- (-1.5, -1.5, 1.5) -- (-1.5, 1.5, 1.5) -- cycle; 
    \fill[black!20] (1.5, 1.5, -1.5) -- (1.5, -1.5, -1.5) -- (-1.5, -1.5, -1.5) -- (-1.5, 1.5, -1.5) -- cycle; 

    \fill[navyblue] (0.5, -1.5, 0.5) -- (0.5, -1.5, -0.5) -- (-0.5, -1.5, -0.5) -- (-0.5, -1.5, 0.5) -- cycle; 
    \fill[navyblue] (0.5, 1.5, 0.5) -- (0.5, 1.5, -0.5) -- (-0.5, 1.5, -0.5) -- (-0.5, 1.5, 0.5) -- cycle; 
    \fill[navyblue] (0.5, -1.5, 0.5) -- (0.5, 1.5, 0.5) -- (0.5, 1.5, -0.5) -- (0.5, -1.5, -0.5) -- cycle; 
    \fill[navyblue] (-0.5, -1.5, 0.5) -- (-0.5, 1.5, 0.5) -- (-0.5, 1.5, -0.5) -- (-0.5, -1.5, -0.5) -- cycle; 
    \fill[navyblue] (0.5, 1.5, 0.5) -- (0.5, -1.5, 0.5) -- (-0.5, -1.5, 0.5) -- (-0.5, 1.5, 0.5) -- cycle; 
    \fill[navyblue] (0.5, -1.5, -0.5) -- (0.5, 1.5, -0.5) -- (-0.5, 1.5, -0.5) -- (-0.5, -1.5, -0.5) -- cycle; 
\end{tikzpicture}}

    \caption{$\O_\e$ the domain. $\O^g_\e$ and $\O^f_\e$ are denoted by blue and light black regions, respectively.}
    \label{Fig01}
\end{figure}

\subsection{The poro-elasticity problem}
The coupled system is governed by the quasi-stationary linear elasticity in the fiber part and Biot's linear poroelasticity in the gel part, which is given by
 \begin{align}
  -\nabla\cdot(A^f_\e e(U_\e))&=f_\e\on^f_\e,\quad &&\text{in}\;\;(0,T)\X\O^f_\e,\\
  -\nabla\cdot (A^g_\e e(U_\e)-\alpha p_\e\GI_3)&=f_\e\on^g_\e\quad&&\text{in}\;\; (0,T)\X\O^g_\e,\\
  \label{Biot1}
  \partial_t(cp_\e+\alpha\nabla\cdot U_\e)-\nabla\cdot (\e^2K\nabla p_\e)&=h_\e\quad &&\text{in}\;\;(0,T)\X\O^g_\e,\\
  	\label{25}
  (A^g_\e e(U_\e)-\alpha p_\e\GI_3)n_\e&=(A^f_\e e(U_\e))n_\e\quad &&\text{on}\;\; (0,T)\X \Gamma_\e,\\
  -\e^2K\nabla p_\e\cdot n_\e&=0,\quad &&\text{on}\;\;(0,T)\X\Gamma_\e,\\
  	\label{28}
  U_\e&=0,\quad &&\text{on}\;\;(0,T)\X\Lambda_{0,\e},\\
  p_\e(0)&=0,\quad &&\text{in}\;\;\O^g_\e,\\
  \label{210}
  w_\e(0)&=0,\quad &&\text{in}\;\;\O^f_\e,
 \end{align}
 where $U_\e:\O_\e\to\R^3$ represents the displacement of the fiber-reinforced hydrogel (i.e. for both the fiber and gel part) with $p_\e:\O_\e^g\to\R^3$ denoting the pore-pressure.
$A_\e^a$ are the elasticity tensors for the fiber and gel part, respectively. $c$, $\alpha$, and $\e^2K$ (symmetric and positive-definite with constant $c_K>0$) are the Biot modulus, the Biot-Willis parameter and the permeability matrix. The external forces are given by $f_\e$ and $h_\e$. 		We have the following assumption on the external loading(s):
\begin{itemize}
	\item Let $f\in H^1([0,T];L^2(\o)^3)$ and $h\in L^2(S\X\o^g)$ such that there exist a $K_1>0$ independent of $\e$ satisfying 
	\begin{equation}\label{AF01}
		\|f\|_{H^1((0,T);L^2(\o))}+\|h\|_{L^2((0,T)\X\o^g)}\leq K_1.
	\end{equation}
	\item Let for
	\begin{equation}\label{AF02}
		\begin{aligned}
			f_\e&=\e f_{\alpha}\Ge_\alpha+\e^2 f_{3}\Ge_3,\quad&&\text{in $(0,T)\X\O_\e$},\\
			h_\e&= \e h,\quad&&\text{in $(0,T)\X\O^g_\e$}.		
		\end{aligned}
	\end{equation}
\end{itemize}
We also assume that the $4$th order Hooke's elastic tensors $A^a_{ijkl}\in L^\infty(\Yc)$ and satisfy the following for $a\in\{f,g\}$
\begin{itemize}
	\item Symmetric: $A^a_{ijkl}=A^a_{jikl}=A^a_{klij}$,
	\item Coercivity condition ($c_0>0$) for all $3\X3$ symmetric matrix $S$ : 
	\begin{equation}\label{CoeCon}
		A^a_{ijkl}(y)S_{ij}S_{kl}\geq c_0S_{ij}S_{ij}=c_0|S|_F^2\quad\text{for a.e. $y\in \Yc^a$}.
	\end{equation} 
	\item We set
	$$A^a_{\e,ijkl}=A^a_{ijkl}\left(\left\{{x'\over \e}\right\},{x_3\over \e}\right),\quad\text{for a.e. $x\in\O^a_\e$.}$$
	\item Let
	$$\GA=\on_{\Yc^f}A^f+\on_{\Yc^g}A^g,\quad \text{and}\quad \GA_\e=\on_{\O_\e^f}A_\e^f+\on_{\O_\e^g}A_\e^g.$$
\end{itemize}
\begin{remark}
	Observe that both the displacement and the forces are continuous across the interface $\Gamma_\e$. Moreover, there is no gel flux between the different cells.
\end{remark}
%
%
%
%
%
        The  corresponding variational form of the coupled system is given by: Find $(U_\e,p_\e)\in \GU_\e$ such that
		\begin{equation}\label{MainPro}
			\begin{aligned}
				\int_{\O_\e}\GA_\e e(U_\e):e(v)\,dx-\int_{\O_\e^g}\alpha p_\e\nabla\cdot v\,dx=\int_{\O_\e}f_\e\cdot v\,dx,\\
				\int_{\O_\e^g}\partial_t(cp_\e+\alpha\nabla\cdot U_\e)\phi\,dx+\int_{\O_\e^g}\e^2K\nabla p_\e\cdot \nabla\phi\,dx=\int_{\O_\e^g}h_\e\phi\,dx,
			\end{aligned}
		\end{equation}
		for all $(v,\phi)\in H^1_0(\O_\e)^3\X H^1(\O_\e^g)$ and for a.e. $t\in (0,T)$, where 
        \begin{multline*}
            \GU_\e=\big\{(w,p)\in H^1((0,T);H^1(\O_\e)^3\X H^1(\O_\e^g))\;|\;
            \text{such that $w$ and $p$ satisfies \eqref{28}--\eqref{210}}\big\}.
        \end{multline*}	

        \section{Main results}\label{Sec04}
            In this section, we summarize the main results of the paper. For detailed proofs and explanations, see Sections \ref{Sec06}--\ref{Sec08}. 
            \begin{theorem}\label{Th01}
                The $\e$-dependent microscopic problem \eqref{MainPro} has a unique weak solution $(U_\e,p_\e)\in\GU_\e$ for every $\e>0$.
            \end{theorem}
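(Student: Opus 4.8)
`The plan is to establish existence via a Galerkin approximation in space combined with standard parabolic/elliptic a priori estimates, followed by a passage to the limit, and then prove uniqueness by an energy argument.

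First I would set up the Galerkin scheme. Fix a basis $\{\xi_k\}_{k\ge 1}$ of $H^1_0(\O_\e)^3$ (restricted to displacements vanishing on $\Lambda_{0,\e}$) and a basis $\{\eta_k\}_{k\ge 1}$ of $H^1(\O_\e^g)$, both chosen, say, as eigenfunctions of suitable elliptic operators so that they are orthonormal in the appropriate $L^2$ spaces. Seek approximate solutions $U_\e^n(t)=\sum_{k=1}^n a_k^n(t)\xi_k$ and $p_\e^n(t)=\sum_{k=1}^n b_k^n(t)\eta_k$ satisfying the finite-dimensional version of \eqref{MainPro} tested against $\xi_j,\eta_j$ for $j\le n$. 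The first equation of \eqref{MainPro} is elliptic in $U_\e^n$ with no time derivative; using Korn's inequality on $\O_\e$ (valid because of the Dirichlet condition on $\Lambda_{0,\e}$) together with the coercivity \eqref{CoeCon} of $\GA_\e$, the bilinear form $\int_{\O_\e}\GA_\e e(\cdot):e(\cdot)$ is coercive, so $U_\e^n(t)$ is determined from $p_\e^n(t)$ and $f_\e(t)$ by a linear solve; substituting into the second equation yields a linear ODE system for $b^n(t)$ of the form $M\dot b^n + N b^n = F^n(t)$ with $M$ symmetric positive definite (the $c\,p$ term plus the Schur complement coming from $\alpha\nabla\cdot U_\e^n$), hence a unique global solution $b^n\in H^1(0,T)$, and correspondingly $a^n\in H^1(0,T)$.

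Next I would derive $\e$-fixed a priori estimates uniform in $n$. Testing the first equation with $\partial_t U_\e^n$ and the second with $p_\e^n$, adding, and integrating in time, the cross terms $\int_{\O_\e^g}\alpha p_\e^n\nabla\cdot\partial_t U_\e^n$ and $\int_{\O_\e^g}\alpha\,\partial_t(\nabla\cdot U_\e^n)p_\e^n$ cancel, giving the basic energy identity; this controls $\|U_\e^n\|_{L^\infty(0,T;H^1)}$, $\|p_\e^n\|_{L^\infty(0,T;L^2(\O_\e^g))}$ and $\|\nabla p_\e^n\|_{L^2((0,T)\times\O_\e^g)}$ in terms of $f_\e,h_\e$ and the zero initial data, via Korn and Grönwall. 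To get the $H^1$-in-time regularity asserted by $\GU_\e$, differentiate the equations in time (legitimate at the Galerkin level since the data $f_\e\in H^1(0,T;L^2)$) and test with $\partial_t U_\e^n$ and $\partial_t p_\e^n$; this yields bounds on $\|\partial_t U_\e^n\|_{L^\infty(0,T;H^1)}$ and $\|\partial_t p_\e^n\|_{L^2((0,T)\times\O_\e^g)}$, using the compatibility that the initial data are zero and $h_\e\in L^2$, $f_\e(0)$ finite. With these uniform bounds I extract a weakly(-star) convergent subsequence, pass to the limit in the linear Galerkin equations (the only terms are linear in $U_\e^n,p_\e^n,\partial_t U_\e^n,\partial_t p_\e^n$, so weak convergence suffices), and verify the initial conditions \eqref{28}--\eqref{210} using the time regularity. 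Finally, uniqueness: given two solutions, their difference solves \eqref{MainPro} with zero data and zero initial conditions; testing with $(\partial_t U_\e, p_\e)$ of the difference and using the same cancellation gives $\frac12\frac{d}{dt}\big(c\|p_\e\|^2_{L^2(\O_\e^g)}\big) + \int_{\O_\e^g}\e^2 K\nabla p_\e\cdot\nabla p_\e + \frac{d}{dt}\big(\tfrac12\int_{\O_\e}\GA_\e e(U_\e):e(U_\e)\big)=0$, wait—more carefully, testing the elliptic equation with $\partial_t U_\e$ gives $\frac{d}{dt}\tfrac12\int\GA_\e e(U_\e):e(U_\e) = \int_{\O_\e^g}\alpha p_\e\nabla\cdot\partial_t U_\e$, which combined with the parabolic equation tested by $p_\e$ yields a Grönwall inequality forcing $p_\e\equiv 0$ and then $U_\e\equiv 0$.

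The main obstacle I anticipate is the higher time-regularity estimate: the system is of mixed elliptic–parabolic (degenerate) type — there is no time derivative on $U_\e$ on its own, only on $cp_\e+\alpha\nabla\cdot U_\e$ — so one must be careful that differentiating in time and testing is admissible and that the cross terms still cancel at the level of the differentiated system; one also needs the permeability scaling $\e^2K$ to play no adverse role here (it does not, since $\e$ is fixed and $\e^2K$ is still uniformly positive definite for fixed $\e$). A secondary technical point is the validity of Korn's inequality on the thin, perforated domain $\O_\e$ with the partial Dirichlet condition on $\Lambda_{0,\e}$; for fixed $\e>0$ this is classical, and no $\e$-uniformity is needed for Theorem \ref{Th01}.
\end{document}
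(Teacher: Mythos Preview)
Your proposal is correct and follows essentially the same route as the paper: Galerkin approximation, elimination of the displacement coefficients via the coercive elliptic block (Korn plus \eqref{CoeCon}) to obtain a linear ODE for the pressure coefficients with a positive-definite mass matrix, the energy identity obtained by testing with $(\partial_t U_\e^n,p_\e^n)$ and using the cancellation of the $\alpha p\,\nabla\!\cdot\partial_t U$ cross terms, Gr\"onwall, and uniqueness by the same energy argument on the difference of two solutions.

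Two small points where the paper differs. First, the paper does not differentiate the system in time to get the $H^1$-in-$t$ regularity; instead it reads off higher time regularity of the Galerkin coefficients directly from the linear ODE (whose right-hand side involves $h_\e\in L^2$ and $\partial_t f_\e\in L^2$). Your plan to differentiate ``the equations'' would need a slight adjustment: since $h_\e$ is only $L^2$ in time you should differentiate only the elliptic equation and test the \emph{undifferentiated} parabolic equation with $\partial_t p_\e^n$; the same cancellation then occurs and yields uniform bounds on $\partial_t U_\e^n$ in $L^2(0,T;H^1)$ and $\partial_t p_\e^n$ in $L^2(0,T;L^2)$. Second, in the basic energy estimate the paper handles $\int f_\e\cdot\partial_t U_\e^n$ by integrating by parts in time (using $f_\e\in H^1(0,T;L^2)$ and $U_\e^n(0)=0$) rather than estimating it directly; this avoids needing any control of $\partial_t U_\e^n$ at that stage. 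Both variants lead to the same conclusion.
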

            Specifically, see Section \ref{Sec06} for the proof of the theorem.\\
            The sequence of weak solutions $(U_\e,p_\e)$ convergences in the sense of unfolding operator (See \eqref{LIC01} for the definition of $\D_0$ and Section \ref{Sec07} for the definition of the unfolding operators) to $(\Wc,p_0)\in \D_0\X H^1((0,T); L^2(\o\; H^1_{per}(\Yc^g))$, which also satisfy the following initial conditions
            $$\Wc(0)=0,\quad\text{in $\o$}\quad \text{and}\quad p_0(0)=0,\quad\text{in $\o\X \Yc^g$}.$$ 
            In the convergences \eqref{IL02}, the {\bf Kirchhoff-Love} type limit displacement is derived.  Finally, we have the following homogenized problem (See \eqref{CFeu1} for the definition of $E(\cdot)$):
            \begin{theorem}
                The limit field $(\Wc,p_0)\in \D_0\X H^1((0,T); L^2(\o\; H^1_{per}(\Yc^g))$ is the unique solution of the problem:
                \begin{equation*}
                \left.\begin{aligned}
                    &\int_{\o} \GA^{hom}E(\Wc):E(\Vc)\,dx'
        -\int_{\o}\alpha p_m\nabla\cdot \Vc_L\,dx'\\
        &\hskip 12mm=\int_\o f\cdot \Vc\,dx'-\int_{\o\X \Yc}\GA e_y(\Gu_p):E(\Vc)\,dx'dy,\\
        &\int_{\o\X\Yc^g}\partial_t\big(p_0+\alpha\nabla\cdot\Wc_L\big)\phi\,dx'dy
       +\int_{\o\X\Yc^g}K\nabla_y p_0\cdot\nabla_y\phi\,dx'dy\\
       &\hskip 12mm=\int_{\o\X \Yc^g}h\phi\,dx'dy-\alpha\int_{\o\X \Yc^g}\big(\nabla_y\cdot\partial_t\Gu_d-\nabla_y\cdot\partial_t\Gu_p)\,dx'dy
                \end{aligned}\right\},
                \end{equation*}
                $\forall\,(\Vc,\phi)\in \D_0\X L^2(\o; H^1_{per}(\Yc^g))$,
                where the homogenized tensor $\GA^{hom}$ is given by \eqref{HomC02}--\eqref{HomC01}, $\Gu_d$ is given by \eqref{MD01} and $\Gu_p$ is the unique solution of the cell problem
                $$ \int_{\Yc}\GA(y) e_y(\Gu_p) : e_y(\wo{\Gv})\,dy={1\over |\Yc|}\int_{\Yc^g}\alpha p_0\nabla_y\cdot \wo{\Gv}\,dy,\quad \text{for all $\wo{\Gv}\in H^1_{per,0}(\Yc)^3$},
                $$
                with
                $$p_m(t,x')={1\over |\Yc|}\int_{\Yc^g}p_0(t,x',y)\,dy,\quad \Vc_L(t,x')=\sum_{\alpha=1}^2(\Vc_\alpha(t,x')-y_3\partial_\alpha\Vc_3(t,x'))\Ge_\alpha+\Vc_3(t,x')\Ge_3,$$
                for $x'\in\o$ and $t\in (0,T)$.
            \end{theorem}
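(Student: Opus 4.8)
The plan is to establish the homogenized system in two stages — first identifying the limit equations by passing to the limit in the unfolded microscopic problem, then proving uniqueness of the limit solution — and finally to note that uniqueness upgrades the subsequential convergences of Theorem~\ref{Th01}'s solutions to convergence of the whole sequence. For the first stage, I would start from the variational formulation \eqref{MainPro}, apply the re-scaling unfolding operator $\Te$ to both equations, and use the a priori estimates of Section~\ref{Sec06} together with the compactness results of Section~\ref{Sec07} (in particular the characterisation of the limit space $\D_0$ and the Kirchhoff-Love structure coming from Griso's plate decomposition) to pass to the limit. The natural test functions are oscillating ones of the form $v = \Vc(x') + \e\,\text{(corrector)}$ with the corrector built from $H^1_{per,0}(\Yc)^3$, and $\phi = \phi(x',y)$ with $\phi\in L^2(\o;H^1_{per}(\Yc^g))$; unfolding converts the $\e^2 K\nabla p_\e$ term into $K\nabla_y p_0$ and the divergence-coupling terms into their two-scale analogues. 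The cell problem for $\Gu_p$ and the formula \eqref{HomC02}--\eqref{HomC01} for $\GA^{hom}$ emerge by the standard separation of the corrector into a part linear in $E(\Wc)$ (giving $\GA^{hom}$) and a part driven by $p_0$ (giving $\Gu_p$); the term $\Gu_d$ enters through the limit of $\nabla\cdot U_\e$ restricted to the gel, via \eqref{MD01}.

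The delicate point in the limit passage — and what I expect to be the main obstacle — is the time-derivative term $\partial_t(cp_\e + \alpha\nabla\cdot U_\e)$ in \eqref{Biot1}: one must justify that $\partial_t$ commutes with the unfolding operator and with the weak limits, which requires the $H^1$-in-time estimates (hence the strengthened regularity obtained via the Galerkin method, as emphasised in the introduction) and a careful treatment of the initial conditions $p_\e(0)=0$, $w_\e(0)=0$ so that $\Wc(0)=0$ and $p_0(0)=0$ survive in the limit. A second technical nuisance is bookkeeping the factors of $\e$ from the scaling \eqref{AF02} of $f_\e$ and $h_\e$ against the scaling built into the re-scaling unfolding operator and into Griso's decomposition; these must balance exactly so that the right-hand sides converge to $\int_\o f\cdot\Vc\,dx'$ and $\int_{\o\times\Yc^g} h\phi\,dx'dy$ and nothing blows up or vanishes. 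I would also need to check that the contribution of the boundary layer $\o\setminus\overline{\o_\e}$ is negligible, which follows from $|\o\setminus\overline{\o_\e}|\to 0$ and the uniform bounds.

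For uniqueness of the homogenized solution, the argument is energy-type: take the difference of two solutions $(\Wc^1,p_0^1)$ and $(\Wc^2,p_0^2)$, which satisfies the same system with zero data and zero initial conditions; test the elliptic equation with $\Vc = \partial_t(\Wc^1-\Wc^2)$ (legitimate since the limit fields are $H^1$ in time) and the parabolic equation with $\phi = p_0^1 - p_0^2$, and add. The coupling terms involving $\alpha$ cancel in the standard Biot way (after also handling the auxiliary fields $\Gu_p$, $\Gu_d$, which are themselves linear and continuous functions of $p_0$ and of $E(\Wc)$ through their defining cell problems, so they contribute coercive or cancelling terms), leaving
\begin{equation*}
\frac{1}{2}\frac{d}{dt}\Big( \int_\o \GA^{hom}E(\Wc^1-\Wc^2):E(\Wc^1-\Wc^2)\,dx' \Big) + \int_{\o\times\Yc^g} K\nabla_y(p_0^1-p_0^2)\cdot\nabla_y(p_0^1-p_0^2)\,dx'dy \le 0,
\end{equation*}
together with control of $\frac{d}{dt}\|p_0^1-p_0^2\|^2$ from the Biot-modulus term. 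Integrating in time and using the coercivity \eqref{CoeCon} of $\GA$ (which transfers to coercivity of $\GA^{hom}$ on symmetric Kirchhoff-Love strains), the positive-definiteness of $K$, and Korn's inequality on $\D_0$ with the clamping on $\Lambda$, one concludes $E(\Wc^1-\Wc^2)\equiv 0$ and $\nabla_y(p_0^1-p_0^2)\equiv 0$; combined with the boundary condition this forces $\Wc^1=\Wc^2$ and $p_0^1=p_0^2$. Existence then follows because the constructed limit is a solution, and uniqueness finally implies the full sequence (not merely a subsequence) converges, completing the proof.
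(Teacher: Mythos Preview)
Your proposal is correct and follows the paper's overall strategy: pass to the limit via the re-scaling unfolding operator with oscillating test functions, separate the corrector via cell problems, and prove uniqueness by an energy argument with the whole-sequence conclusion following from uniqueness.

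The one structural difference worth flagging is \emph{where} uniqueness is established. The paper first derives an intermediate \emph{unfolded limit problem} \eqref{MUP} in which the full microscopic corrector $\wo{\Gu}\in L^2(\o;H^1_{per,0}(\Yc))^3$ is kept as an additional unknown alongside $(\Wc,p_0)$; uniqueness is proved for this enlarged triple by testing with $(\partial_t\Wc,\partial_t\wo{\Gu},p_0)$, where the Biot coupling terms cancel exactly and the Korn-type inequality \eqref{UK01} for unfolded fields closes the estimate. Only after uniqueness is secured is $\wo{\Gu}$ eliminated through the splitting $\wo{\Gu}=\Gu_d-\Gu_p$ and the cell problems to reach the homogenized form in the statement. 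Your route---proving uniqueness directly for the homogenized system---is equivalent in principle but more awkward: the solution-dependent right-hand-side terms $\int_{\o\times\Yc}\GA e_y(\Gu_p):E(\Vc)$ and $\alpha\int_{\o\times\Yc^g}(\nabla_y\cdot\partial_t\Gu_d-\nabla_y\cdot\partial_t\Gu_p)\phi$ do not visibly cancel at that level, and your assertion that they ``contribute coercive or cancelling terms'' effectively requires undoing the cell-problem substitution to recover the unfolded energy identity. The paper's ordering sidesteps this bookkeeping; in particular, coercivity of $\GA^{hom}$ alone does not obviously absorb the $\Gu_p$-coupling, whereas the full unfolded quadratic form $\GA(E(\Wc)+e_y(\wo{\Gu})):(E(\Wc)+e_y(\wo{\Gu}))$ does so automatically.
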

        See Section \ref{Sec08} for the proof of the above theorem.
        \section{Preliminary results: Extension  and decomposition results}\label{Sec05}
		\subsection{Extension results}
		Below, we first recall a well-known extension result
		\begin{lemma}\label{lem102} There exists an extension operator ${{\bm\Pc}}$ from $H^1(\Yc^f)^3$ into $H^1(\Yc)^3$ satisfying
			$$
			\begin{aligned}
				&\forall\,\Phi\in H^1(\Yc^f)^3,\qquad {\bm\Pc}(\Phi)\in H^1(\Yc)^3,\qquad  \Pc(\Phi)_{|\Yc^f}=\Phi,\\
				&\big\|e\big({\bm\Pc}(\Phi)\big)\big\|_{L^2(\Yc)}\leq C\|e(\Phi)\|_{L^2(\Yc^f)}.
			\end{aligned}
			$$
		\end{lemma}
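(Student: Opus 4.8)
The plan is to use the classical construction of an elasticity extension operator across the holes: one extends $\Phi$ in the ordinary Sobolev sense, but first corrects by a rigid displacement so that only the strain tensor $e(\Phi)$, and not the full gradient $\nabla\Phi$, appears in the bound.

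First I would fix, once and for all, an auxiliary bounded connected Lipschitz open set $\Uc$ with $\wo{\Yc^g}\subset\Uc$ and $\Uc^f:=\Uc\setminus\wo{\Yc^g}\subset\Yc^f$; that is, $\Uc$ consists of the hole $\Yc^g$ together with a thin collar lying inside the fibre part. This is possible because $\Yc^g$ is a Lipschitz domain and $\gamma=\wo{Y^f}\cap\wo{Y^g}$ is a Lipschitz interface with $\wo{Y^g}$ strictly inside $Y$ in the horizontal directions. Both $\Uc$ and $\Uc^f$ are bounded connected Lipschitz domains depending only on $\Yc$ (not on $\e$), so there is a bounded linear Sobolev extension operator $P:H^1(\Uc^f)^3\to H^1(\Uc)^3$ with $P(\psi)_{|\Uc^f}=\psi$ and $\|P(\psi)\|_{H^1(\Uc)}\le C\|\psi\|_{H^1(\Uc^f)}$.

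Given $\Phi\in H^1(\Yc^f)^3$, let $r=r(\Phi)$ be its $L^2(\Uc^f)^3$-orthogonal projection onto the six-dimensional space $\Rc$ of rigid displacements $y\mapsto a+b\wedge y$; the map $\Phi\mapsto r(\Phi)$ is linear. Korn's second inequality on the connected Lipschitz set $\Uc^f$ gives
\[
\|\Phi-r\|_{H^1(\Uc^f)}\ \le\ C\,\|e(\Phi)\|_{L^2(\Uc^f)}\ \le\ C\,\|e(\Phi)\|_{L^2(\Yc^f)}.
\]
I then define ${\bm\Pc}(\Phi)=\Phi$ in $\Yc^f$ and ${\bm\Pc}(\Phi)=P(\Phi-r)+r$ in $\Yc^g$. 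On $\Uc^f$ the two formulas agree, since $P(\Phi-r)_{|\Uc^f}=\Phi-r$, so ${\bm\Pc}(\Phi)$ has matching traces across $\partial\Yc^g$ and therefore ${\bm\Pc}(\Phi)\in H^1(\Yc)^3$ with ${\bm\Pc}(\Phi)_{|\Yc^f}=\Phi$; the operator ${\bm\Pc}$ is plainly linear. For the energy bound, on $\Yc^f$ we have $e({\bm\Pc}(\Phi))=e(\Phi)$, while on $\Yc^g\subset\Uc$, using $e(r)=0$,
\[
\begin{aligned}
\|e({\bm\Pc}(\Phi))\|_{L^2(\Yc^g)}
&=\|e(P(\Phi-r))\|_{L^2(\Yc^g)}\le\|\nabla P(\Phi-r)\|_{L^2(\Uc)}\\
&\le C\|P(\Phi-r)\|_{H^1(\Uc)}\le C\|\Phi-r\|_{H^1(\Uc^f)}\le C\|e(\Phi)\|_{L^2(\Yc^f)}.
\end{aligned}
\]
Adding the two contributions yields $\|e({\bm\Pc}(\Phi))\|_{L^2(\Yc)}\le C\|e(\Phi)\|_{L^2(\Yc^f)}$.

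The only point requiring care is this rigid-displacement correction combined with Korn's second inequality, which is exactly what upgrades the crude estimate $\|\nabla P(\Phi-r)\|_{L^2}\le C\|\Phi-r\|_{H^1}$ to a bound by the strain alone; the existence of the Lipschitz collar $\Uc$ and of the Sobolev extension $P$ is standard. Alternatively, one may simply quote the classical construction, e.g. in Oleinik--Shamaev--Yosifian or Cioranescu--Saint Jean Paulin.
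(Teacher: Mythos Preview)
The paper does not prove this lemma; it simply states it as a well-known extension result and moves on. Your argument is exactly the classical construction (Oleinik--Shamaev--Yosifian, also in Cioranescu--Saint Jean Paulin) and is correct: subtract the rigid projection on a collar, extend in the Sobolev sense, add the rigid displacement back, and use Korn's second inequality on the collar to convert the $H^1$-bound into a strain bound.

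One small geometric remark worth making explicit: since $\Yc^g=Y^g\times(-1,1)$ reaches the top and bottom faces $\{y_3=\pm1\}$ of $\Yc$, the collar cannot surround $\overline{\Yc^g}$ in the $y_3$-direction. What you want (and what your phrase ``strictly inside $Y$ in the horizontal directions'' already hints at) is a cylindrical collar $\Uc=U\times(-1,1)$ with $U\subset Y$ a 2D Lipschitz neighbourhood of $\overline{Y^g}$; then $\Uc\setminus\overline{\Yc^g}=(U\setminus\overline{Y^g})\times(-1,1)\subset\Yc^f$ is a bounded Lipschitz domain on which Korn's inequality applies. If $Y^g$ has several connected components one simply treats each separately with its own rigid correction. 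With that clarification the proof is complete.
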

As a consequence of the above, we have
		\begin{lemma}\label{lem5}
        Let $(U_\e,p_\e)\in \GU_\e$, there exists $\Gw_\e$ and $\Gu_\e$ in $H^1((0,T);H^1(\O_\e)^3)$ satisfying the following: 
            \begin{equation}\label{ExDis}
                U_\e=\Gw_\e+\Gu_\e\quad \text{in $(0,T)\X\O_\e$}.
            \end{equation}
            Moreover, we have  for all $t\in (0,T)$
        \begin{align}
                \label{46}
				\Gw_\e&=0,\quad \text{on $\Lambda_{0,\e}$},\\
    \label{47}
				\Gu_\e&=0,\quad \text{on $\Gamma_\e\cup{\O^f_\e}$},\\
                				\label{48}
				\|e(\Gw_\e)\|_{L^2(\O_\e)}+\|e(\Gu_\e)\|_{ L^2(\O_\e)}&\leq C\|e(U_\e)\|_{L^2(\O_\e)},\\
                \label{410}
				\|\Gu_\e\|_{L^2(\O_\e^g)}+\e\|\nabla \Gu_\e\|_{L^2(\O_\e^g)}&\leq C\e\|e( \Gu_\e)\|_{L^2(\O_\e)}.
			\end{align}
			The constant(s) are independent of $\e$.
		\end{lemma}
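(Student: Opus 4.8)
The plan is to construct the decomposition $U_\e = \Gw_\e + \Gu_\e$ cell-by-cell using the extension operator $\bm\Pc$ from Lemma \ref{lem102}, suitably rescaled to the $\e$-sized cells, and then to patch the local pieces together. More precisely, on each reference cell $\Yc$ I would define, for a displacement $\Phi$ given on the fiber part $\Yc^f$, the extension $\bm\Pc(\Phi)\in H^1(\Yc)^3$; the field $\Gu$ is then the ``gel-only'' remainder that should vanish on $\Yc^f$ (hence on $\Gamma$). The first step is therefore to set up the scaling: on the physical cell $\e(k+Y)\times(-\e,\e)$ one pulls back to $\Yc$ via $x\mapsto \big(\{x'/\e\},x_3/\e\big)$, applies $\bm\Pc$, and scales back. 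Because the extension in Lemma \ref{lem102} controls only the \emph{symmetrized} gradient, I must be careful that the rescaling is compatible: under the anisotropic scaling $(x',x_3)\mapsto(\e y',\e y_3)$ (which here is actually isotropic with factor $\e$, since each cell is $\e Y\times(-\e,\e)$ and the reference is $Y\times(-1,1)$), $e(\cdot)$ scales by $\e^{-1}$ in a clean way, so \eqref{48} follows from the reference-cell estimate in Lemma \ref{lem102} summed over $k\in\Kc_\e$.

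The second step is to handle the boundary conditions \eqref{46}--\eqref{47}. Since $U_\e=0$ on $\Lambda_{0,\e}=\partial\o\times(-\e,\e)$ (condition \eqref{28}), and since $\Gu_\e$ is supported in the gel cells which are compactly contained in $\o_\e\subset\o$, the field $\Gu_\e$ automatically vanishes near $\partial\o$; one should also cut off $\Gu_\e$ to be zero on the boundary cells $(\o\setminus\overline{\o_\e})\times(-\e,\e)$, which is harmless because no gel sits there. Then $\Gw_\e:=U_\e-\Gu_\e$ inherits $\Gw_\e=0$ on $\Lambda_{0,\e}$, giving \eqref{46}. Property \eqref{47}, that $\Gu_\e$ vanishes on $\Gamma_\e\cup\O^f_\e$, is built into the construction: on $\Yc^f$ we have $\bm\Pc(\Phi)|_{\Yc^f}=\Phi$, so the remainder is identically zero there, and by continuity of the $H^1$ trace it vanishes on the interface $\Gamma$ too. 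The regularity $\Gw_\e,\Gu_\e\in H^1((0,T);H^1(\O_\e)^3)$ is immediate since $\bm\Pc$ is a bounded \emph{linear} operator, hence commutes with $\partial_t$ and preserves time-regularity of $U_\e\in\GU_\e$.

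The third step is the Poincaré-type estimate \eqref{410}. Here the key point is that $\Gu_\e$ vanishes on the fiber part of each cell, in particular on a set of positive measure touching each gel inclusion $\e(Y^g+k)\times(-\e,\e)$ (namely the interface $\e(\gamma+k)\times(-\e,\e)$). On the reference cell $\Yc^g$, the extended field $\bm\Pc(\Phi)-\Phi$-type remainder vanishes on $\gamma\times(-1,1)$, so a Poincaré inequality on $\Yc$ (or directly a Korn--Poincaré inequality, using that it vanishes on a piece of boundary of positive measure) gives $\|\Gu\|_{L^2(\Yc^g)}+\|\nabla\Gu\|_{L^2(\Yc^g)}\le C\|e(\Gu)\|_{L^2(\Yc)}$ with $C$ depending only on $\Yc$. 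Rescaling back to the $\e$-cell: $\|\Gu_\e\|_{L^2}$ picks up $\e^{3/2}$ from the volume and the reference $L^2$-norm, $\|\nabla\Gu_\e\|_{L^2}$ picks up $\e^{3/2-1}=\e^{1/2}$, and $\|e(\Gu_\e)\|_{L^2}$ likewise $\e^{1/2}$; matching powers yields exactly $\|\Gu_\e\|_{L^2(\O^g_\e)}+\e\|\nabla\Gu_\e\|_{L^2(\O^g_\e)}\le C\e\|e(\Gu_\e)\|_{L^2(\O_\e)}$ after summing over $k$.

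I expect the main obstacle to be the careful bookkeeping of the anisotropic rescaling and, relatedly, making sure the constant in \eqref{48} and \eqref{410} is genuinely $\e$-independent despite the thin direction. The subtlety is that the cells are $\e Y\times(-\e,\e)$, so in the rescaled variable $y_3=x_3/\e$ the thin direction is ``inflated'' to unit size, and one must verify that Lemma \ref{lem102}'s extension, which is stated on the \emph{fixed} cell $\Yc=Y\times(-1,1)$, transfers without any hidden $\e$-dependence — this is fine precisely because after rescaling every cell looks identical to $\Yc$. A secondary point requiring care is the global $H^1$-conformity of the patched field $\Gu_\e$: one needs the local extensions on neighbouring cells to match across cell interfaces. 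This is guaranteed because $\Gu_\e$ is supported strictly inside each gel inclusion's closure with zero trace on $\e(\gamma+k)\times(-\e,\e)$ (and the gel inclusions are mutually disjoint, meeting neighbours only through the connected fiber framework where $\Gu_\e\equiv0$), so the pieces glue together in $H^1(\O_\e)^3$ with no compatibility condition to check. I would close by noting that the time-derivative estimates corresponding to \eqref{48}--\eqref{410} hold by applying the same argument to $\partial_t U_\e$, using linearity of $\bm\Pc$.
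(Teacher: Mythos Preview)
Your proposal is correct and follows essentially the same route as the paper: cell-by-cell application of the reference extension $\bm\Pc$ from Lemma \ref{lem102} after rescaling each $\e(k+Y)\times(-\e,\e)$ to $\Yc$, defining $\Gw_\e$ as the patched extension and $\Gu_\e=U_\e-\Gw_\e$, and then obtaining \eqref{410} from Korn's inequality on $\Yc^g$ with the vanishing trace on $\Gamma$, followed by the same isotropic rescaling you describe. Your treatment is in fact more careful than the paper's on two points: you correctly note that $\Gu_\e$ vanishes only on the lateral interface $\gamma\times(-1,1)$ (a portion of $\partial\Yc^g$ of positive measure) rather than on all of $\partial\Yc^g$ as the paper writes, and you explicitly address the $H^1$-gluing and the boundary cells $(\o\setminus\overline{\o_\e})\times(-\e,\e)$, which the paper leaves implicit.
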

  \begin{proof}
  Let $(U_\e,p_\e)\in \GU_\e$.
  	We apply Lemma \ref{lem102} to the function  $U_{k,\e}$ for all $k\in\Kc_\e$ defined by
			$$U_{k,\e}(y)=U_\e\big(\e k+\e y',\e y_3)\qquad \hbox{for a.e. }\; y\in \Yc^f.$$
			Then, after a change of variables we obtain the extension ${\bm\Pc}_\e(U_\e)$ of ${U_\e}_{|\O^f_\e}$.  
      We set
            $${\bm\Pc}_\e(U_\e)=\Gw_\e,\quad\text{and}\quad \Gu_\e=U_\e-\Gw_\e,\quad\text{in $(0,T)\X \O_\e$},$$
            then, we have
            $$\Gu_\e=0,\quad\text{a.e. in $(0,T)\X(\Gamma_\e\cup {\O^f_\e})$}.$$       
            By construction and due to the estimates in Lemma  \ref{lem102} and the above setup we obtain \eqref{46}--\eqref{48}.\\
      Recalling that
      $$\O^g_\e=\bigcup_{k\in\Kc_\e}\e(k+\Yc^g).$$
      We set $v^k_\e(y)=\Gu_\e(\e k+\e y)$ for all $y\in \Yc^g$ and for all $k\in \Kc_\e$, then using \eqref{47}, we have
       $$v^k_\e=0,\quad \text{on $\partial \Yc^g$ for all $k\in \Kc_\e$}.$$ 
      So, using second Korn's inequality, we get 
      $$\|v^k_\e\|_{L^2(\Yc^g)}+\|\nabla_y v^k_\e\|_{L^2(\Yc^g)}\leq C\|e_y(v^k_\e)\|_{L^2(\Yc^g)},\quad\forall\, k\in\Kc_\e.$$
      The constant only depends on $\Yc^g$, not on $\e$.\\
      We obtain after the changing the variable
      $$\nabla_y v^k_\e(y)=\e\nabla \Gu_\e(\e k+\e y),\quad \text{for a.e. $y\in\Yc^g$},$$
      which imply
      $$\e^3\|\Gu_\e\|^2_{L^2(\e k+\e \Yc^g)}+ \e^5\|\nabla\Gu_\e\|_{L^2(\e k+ \e \Yc^g)}\leq C\e^5 \| e(\Gu_\e)\|^2_{L^2(\e k+\e \Yc^g)}.$$
      Hence, adding for all $k\in \Kc_\e$ and dividing by $\e^3$ give \eqref{410}.
  \end{proof}
\subsection{Griso's decomposition of displacement result}
Below, we recall the decomposition of plate displacements, it was first introduced in \cite{BGRod1}, and further developed in \cite{CDG} and \cite{GKL}. For more details on the lemma below see Subsection 11.1.2 and 11.1.3 of \cite{CDG}.
\begin{lemma}\label{DeComKL1} Let $\Gw\in H^1(\O_\e)^3$, then we have
			\begin{equation}\label{PD1}
				\Gw= W_{E} + \wo{w},\qquad \text{  a.e. in }\;\;\O_\e,
			\end{equation} 
			where $W_{E}$ is given by 
			$$
			W_{E} (x) = \left(\begin{aligned}
				&\Wc_1 (x') + x_3 \Rc_1(x')\\
				&\Wc_2 (x') + x_3 \Rc_2(x')\\
				&\hspace{0.9cm}\Wc_3 (x')
			\end{aligned}\right)
			\;\; \text{for a.e.}\;\; x=(x', x_3) \in \O_\e.
			$$
and			\begin{align*}
     \Wc\in H^1(\o)^3,\quad \Rc\in H^1(\o)^2,\quad \wo{w}\in H^1(\O_\e)^3\\
     \text{with}\quad\Uc(x')={1\over 2\e}\int_{-\e}^\e \Gw(x',x_3)\,dx_3,\quad\text{and}\quad \int_{-\e}^\e\wo{w}(\cdot,x_3)\,dx_3=0,\quad \text{for all $x'\in\o$}. 
			\end{align*}
			  Moreover, we have the following estimates 
			\begin{equation}\label{PD2}
				\begin{aligned}
					\|\Wc_\alpha\|_{H^1(\o)}+\e(\|\Wc_3\|_{H^1(\o)}+\|\Rc_\alpha\|_{H^1(\o)}) &\leq {C \over \e^{1/2}} \|e(\Gw)\|_{L^2(\O_\e)},\\
					\|\partial_\alpha\Wc_3+\Rc_\alpha\|_{L^2(\o)} &\leq {C \over \e^{1/2} }\|e(\Gw)\|_{L^2(\O_\e)},\\
					\|\wo{w}\|_{L^2(\O_\e)} +\e\|\nabla\wo{w}\|_{L^2(\O_\e)} &\leq C\e \|e(\Gw)\|_{L^2(\O_\e)}.
				\end{aligned}
			\end{equation}
            If we have $\Gw=0$ a.e. on $\Lambda_{0,\e}$, then we get	
            $$
			\Wc=0,\quad \Rc=0,\quad\text{a.e. on}\quad \partial\o,\quad \text{and}\quad \wo{w}=0\quad\text{a.e. on}\quad \Lambda_{0,\e}.
			$$
        Moreover, we have the following Korn-type inequalities
			\begin{equation}\label{KornPUD1}
				\begin{aligned}
					\|\Gw_1\|_{L^2(\O_\e)}+\|\Gw_2\|_{L^2(\O_\e)}+\e\|\Gw_3\|_{L^2(\O_\e)}&\leq C\|e(\Gw)\|_{L^2(\O_\e)},\\
					\|\nabla \Gw\|_{L^2(\O_\e)}&\leq {C\over\e}\|e(\Gw)\|_{L^2(\O_\e)}.
				\end{aligned}
			\end{equation}
			The constants do not depend on $\e$.
		\end{lemma}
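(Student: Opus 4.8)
The plan is to establish the decomposition \eqref{PD1} constructively, then verify the estimates \eqref{PD2}, the boundary conditions, and finally the Korn-type inequalities \eqref{KornPUD1}. First I would define the \emph{elementary (Kirchhoff--Love) part} $W_E$ explicitly from $\Gw$ by averaging over the thickness: set
$$
\Uc(x')={1\over 2\e}\int_{-\e}^\e \Gw(x',x_3)\,dx_3,\qquad
\Rc_\alpha(x')={3\over 2\e^3}\int_{-\e}^\e x_3\,\Gw_\alpha(x',x_3)\,dx_3,
$$
and then $\Wc_\alpha=\Uc_\alpha$, $\Wc_3=\Uc_3$, so that $W_E$ is the $L^2(-\e,\e)$-projection of $\Gw$ onto the span of $\{1,x_3\}$ in the first two components and onto constants in the third. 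Defining $\wo{w}=\Gw-W_E$ then automatically gives $\int_{-\e}^\e\wo{w}(\cdot,x_3)\,dx_3=0$ for the $x'$-dependence, and the regularity $\Wc\in H^1(\o)^3$, $\Rc\in H^1(\o)^2$, $\wo w\in H^1(\O_\e)^3$ follows since averaging against fixed polynomials in $x_3$ commutes with the $x'$-derivatives and is continuous on $L^2$. This part is routine bookkeeping.

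The substantive step is the chain of estimates \eqref{PD2}. The key idea is a one-dimensional Poincaré--Wirtinger-type inequality on the fiber $(-\e,x_3)\mapsto$ fixed $x'$: since $\wo w$ has zero thickness-average (and, for the transverse components, its first moment is also killed in $\Wc_\alpha,\Rc_\alpha$), one gets $\|\wo w(x',\cdot)\|_{L^2(-\e,\e)}\le C\e\|\partial_3\wo w(x',\cdot)\|_{L^2(-\e,\e)}$, hence $\|\wo w\|_{L^2(\O_\e)}\le C\e\|\partial_3\wo w\|_{L^2(\O_\e)}$; and $\partial_3\wo w=\partial_3\Gw-(\text{terms in }\Rc)$, where $\partial_3\Gw_3=e_{33}(\Gw)$ and $\partial_3\Gw_\alpha$ is controlled by $e_{\alpha 3}(\Gw)$ after subtracting the $\partial_\alpha\Wc_3$ piece. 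Working component by component: $e_{\alpha\beta}(W_E)=e_{\alpha\beta}(\Uc)$ gives $\|e_{\alpha\beta}(\Uc)\|_{L^2(\o)}\le C\e^{-1/2}\|e(\Gw)\|_{L^2(\O_\e)}$, then 2D Korn on $\o$ (using $\Gw=0$ on $\Lambda_{0,\e}$, which forces $\Wc=0$ on $\partial\o$) upgrades this to the full $H^1(\o)$-bound on $\Wc_\alpha$; the factor $\e^{1/2}$ is just the Jacobian $|\O_\e|^{1/2}\sim\e^{1/2}$ from integrating an $x'$-only quantity over the slab. The shear identity $2e_{\alpha3}(\Gw)=\partial_3\Gw_\alpha+\partial_\alpha\Gw_3$ together with the explicit formulas for $\Rc_\alpha$ and $\partial_\alpha\Wc_3$ yields $\|\partial_\alpha\Wc_3+\Rc_\alpha\|_{L^2(\o)}\le C\e^{-1/2}\|e(\Gw)\|_{L^2(\O_\e)}$, and differentiating $\Rc_\alpha$ in $x'$ and commuting with the moment-average gives the $\e\|\Rc_\alpha\|_{H^1}$ and $\e\|\Wc_3\|_{H^1}$ bounds. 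These are exactly the estimates in Subsection 11.1.2--11.1.3 of \cite{CDG}, which I would cite for the detailed constants.

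For the boundary conditions, if $\Gw=0$ on $\Lambda_{0,\e}=\partial\o\times(-\e,\e)$, then averaging (resp. taking the first moment) in $x_3$ shows $\Uc=0$ and $\Rc=0$ on $\partial\o$, hence $\Wc=0$, $\Rc=0$ there, and consequently $\wo w=\Gw-W_E=0$ on $\Lambda_{0,\e}$. Finally, the Korn-type inequalities \eqref{KornPUD1} follow by writing $\Gw=W_E+\wo w$ and estimating each piece: for $\Gw_\alpha$, $\|W_{E,\alpha}\|_{L^2(\O_\e)}\le\e^{1/2}(\|\Wc_\alpha\|_{L^2(\o)}+\e\|\Rc_\alpha\|_{L^2(\o)})\le C\|e(\Gw)\|_{L^2(\O_\e)}$ by \eqref{PD2}, and $\|\wo w_\alpha\|_{L^2(\O_\e)}\le C\e\|e(\Gw)\|_{L^2(\O_\e)}$; similarly $\e\|\Gw_3\|_{L^2(\O_\e)}$ is controlled using the $\e\|\Wc_3\|_{H^1(\o)}$ bound. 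For the gradient bound, $\nabla W_E$ involves $\nabla'\Wc$, $\Rc$ and $x_3\nabla'\Rc$, each of which carries at worst a $\e^{-1}$ after accounting for the slab Jacobian, while $\|\nabla\wo w\|_{L^2(\O_\e)}\le C\|e(\Gw)\|_{L^2(\O_\e)}\le C\e^{-1}\|e(\Gw)\|_{L^2(\O_\e)}$; combining gives $\|\nabla\Gw\|_{L^2(\O_\e)}\le C\e^{-1}\|e(\Gw)\|_{L^2(\O_\e)}$.

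The main obstacle I anticipate is bookkeeping the $\e$-powers correctly and transparently: the decomposition mixes $x'$-only fields (whose $L^2(\O_\e)$ and $L^2(\o)$ norms differ by $\e^{1/2}$) with the genuinely three-dimensional remainder $\wo w$, and one must track the Jacobian of the rescaling $y\mapsto(\e k+\e y',\e y_3)$ consistently through every inequality. Beyond that, the only analytically delicate point is the 2D Korn inequality on $\o$ for $\Wc$, which requires the clamped portion $\Wc=0$ on $\partial\o$ (equivalently $\Lambda$ of positive measure) to control rigid-body motions; everything else is the one-dimensional Poincaré inequality on the thin fiber and elementary polynomial algebra, and I would defer to \cite{CDG} for the explicit constants rather than reproduce them.
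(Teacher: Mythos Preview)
The paper does not supply its own proof of this lemma; it explicitly recalls the result and refers the reader to Subsections 11.1.2--11.1.3 of \cite{CDG} (and the earlier works \cite{BGRod1}, \cite{GKL}) for the details. Your constructive approach---defining $\Wc$ and $\Rc$ as the zeroth and first $x_3$-moments of $\Gw$, setting $\wo w$ as the remainder, and then combining the one-dimensional Poincar\'e--Wirtinger inequality on the fiber with the 2D Korn inequality on $\o$---is exactly the standard argument carried out in those references, so your proposal coincides with what the paper defers to.
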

        \begin{remark}
            The displacement $W_E$ is called an elementary displacement, which is a generalization of rigid displacements. The field $\Wc(x')$ is the mid-surface displacement at the point $x'\in\o$ while $x_3\Rc(x')$ represents the small linearized rotation of the fiber $\{x'\}\X(-\e,\e)$. The residual displacement $\wo{w}$ is called the warping, which captures the microscopic behavior of the displacement $\Gw$.
        \end{remark}
		We also recall another decomposition, see Lemma 3.5 in \cite{griso2025} for more details. 
		\begin{lemma}\label{DeCom2}
			Let $\Gu\in H^1(\O_\e)^3$, then we define
			$$\Uc={1\over 2\e}\int_{-\e}^\e \Gu\,dx_3,\quad \text{in $\o$},\quad\text{and}\quad \wo{u}=\Gu-\Uc,\quad\text{in $\O_\e$}.$$
			Then, we have
			$$\Uc\in H^1(\o)^3,\quad \wo{\Gu}\in H^1(\O_\e)^3,\quad\text{with}\quad \int_{-\e}^\e\wo{u}\,dx_3=0,\quad\text{a.e. in $\o$}.$$
			Moreover, it satisfies
			\begin{equation}
				\begin{aligned}
					\|\Uc\|_{L^2(\o)}&\leq {C\over \e^{1/2}}\|\Gu\|_{L^2(\O_\e)},\quad &&\|\partial_\alpha \Uc\|_{L^2(\o)}\leq {C\over \e^{1/2}}\|\nabla\Gu\|_{L^2(\O_\e)},\\
					\|\wo{u}\|_{L^2(\O_\e)}&\leq C\|{\Gu}\|_{L^2(\O_\e)},\quad&&\|\nabla \wo{u}\|_{L^2(\O_\e)}\leq C\|\nabla\Gu\|_{L^2(\O_\e)}.
				\end{aligned}
			\end{equation}
			The constant(s) are independent of $\e$.\\
			Furthermore, if $\Gu=0$ in $\Gamma_\e\cup {\O^f_\e}$, then we have
			$$\wo{u}=0,\quad\text{in $\Gamma_\e\cup {\O^f_\e}$}.$$
		\end{lemma}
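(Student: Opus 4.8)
The plan is to treat every assertion as a consequence of the boundedness of the thickness-averaging operator $\Gu\mapsto\Uc=\frac{1}{2\e}\int_{-\e}^\e\Gu\,dx_3$, keeping careful track of the powers of $\e$ produced by the slab thickness $2\e$. First I would record the structural facts: $\Uc$ is well defined for a.e.\ $x'\in\o$ by Fubini's theorem, $\wo{u}=\Gu-\Uc$ lies in $H^1(\O_\e)^3$ as a difference of $H^1$-functions (reading $\Uc$ as a function on $\O_\e$ that does not depend on $x_3$), and the zero mean-value $\int_{-\e}^\e\wo{u}(x',\cdot)\,dx_3=2\e\,\Uc(x')-2\e\,\Uc(x')=0$ holds for a.e.\ $x'\in\o$ straight from the definition of $\Uc$. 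The regularity $\Uc\in H^1(\o)^3$ is obtained by showing that the tangential weak derivative commutes with the average, $\partial_\alpha\Uc=\frac{1}{2\e}\int_{-\e}^\e\partial_\alpha\Gu\,dx_3$, which follows by testing against $\varphi\in C_c^\infty(\o)$ and applying Fubini together with the definition of the weak derivative of $\Gu$.

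For the estimates on $\Uc$, I would apply the Cauchy--Schwarz inequality in the $x_3$-variable to get $|\Uc(x')|^2\le\frac{1}{2\e}\int_{-\e}^\e|\Gu(x',x_3)|^2\,dx_3$ pointwise a.e.\ on $\o$, and then integrate over $\o$ to obtain $\|\Uc\|_{L^2(\o)}^2\le\frac{1}{2\e}\|\Gu\|_{L^2(\O_\e)}^2$; replacing $\Gu$ by $\partial_\alpha\Gu$ and invoking the commutation identity gives $\|\partial_\alpha\Uc\|_{L^2(\o)}^2\le\frac{1}{2\e}\|\partial_\alpha\Gu\|_{L^2(\O_\e)}^2\le\frac{1}{2\e}\|\nabla\Gu\|_{L^2(\O_\e)}^2$. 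For the estimates on $\wo{u}$, the key observation is that, viewed on $\O_\e$, the $x_3$-independent function $\Uc$ satisfies $\|\Uc\|_{L^2(\O_\e)}^2=2\e\,\|\Uc\|_{L^2(\o)}^2\le\|\Gu\|_{L^2(\O_\e)}^2$, so the triangle inequality controls $\|\wo{u}\|_{L^2(\O_\e)}$ by $2\|\Gu\|_{L^2(\O_\e)}$. For the gradient I would use that $\partial_3\wo{u}=\partial_3\Gu$ (as $\Uc$ is independent of $x_3$) and $\partial_\alpha\wo{u}=\partial_\alpha\Gu-\partial_\alpha\Uc$ with $\|\partial_\alpha\Uc\|_{L^2(\O_\e)}^2=2\e\,\|\partial_\alpha\Uc\|_{L^2(\o)}^2\le\|\partial_\alpha\Gu\|_{L^2(\O_\e)}^2$, whence $\|\nabla\wo{u}\|_{L^2(\O_\e)}\le C\|\nabla\Gu\|_{L^2(\O_\e)}$; every constant here is a pure number, independent of $\e$.

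Finally, for the vanishing statement I would use that $\O^f_\e=\o_\e^f\X(-\e,\e)$ and $\Gamma_\e=\gamma_\e\X(-\e,\e)$ are cylindrical in the $x_3$-direction: the hypothesis $\Gu=0$ a.e.\ on $\Gamma_\e\cup\O^f_\e$ means precisely that $\Gu(x',\cdot)=0$ a.e.\ on $(-\e,\e)$ for a.e.\ $x'\in\o_\e^f\cup\gamma_\e$; for every such $x'$ one gets $\Uc(x')=0$, and therefore $\wo{u}(x',\cdot)=\Gu(x',\cdot)-\Uc(x')=0$ a.e., i.e.\ $\wo{u}=0$ in $\Gamma_\e\cup\O^f_\e$.

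There is no genuine obstacle in this lemma — it is of the same flavour as Lemma \ref{DeComKL1}, only simpler, since the \emph{elementary} part here is merely the thickness average rather than a full elementary plate displacement. The only points that deserve a little care are the bookkeeping of the $\e^{1/2}$ factors coming from the slab thickness and the (routine) verification that the weak tangential derivative of the thickness average is the average of the weak tangential derivative.
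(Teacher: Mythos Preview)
Your proof is correct and complete; every step (Fubini for the averaging, Cauchy--Schwarz in $x_3$ for the $\e^{-1/2}$ factors, the cylindrical structure of $\O^f_\e$ and $\Gamma_\e$ for the vanishing statement) is exactly what is needed. The paper does not actually prove this lemma but merely recalls it with a pointer to Lemma~3.5 in \cite{griso2025}, so there is no in-paper argument to compare against; your direct elementary verification is the natural one.
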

        This kind of decomposition is more suited when we already have a Korn-type inequality and we have sharper $L^2$-estimate of $\Gu$ then the $L^2$-estimate of its gradient $\nabla \Gu$.
		\section{Existence result and a priori estimates}\label{Sec06}
		In this section, firstly we give a priori estimates assuming the existence of a unique solution of the $\e$-problem \eqref{MainPro} in $\GU_\e$ then we provide the existence proof using the Galerkin method.
		\begin{lemma}\label{Lem06}
			Let $(U_\e,p_\e)\in\GU_\e$ be the unique weak solution of \eqref{MainPro}, then it satisfy
			\begin{equation}\label{Est01}
				\begin{aligned}
					\|p_\e\|_{L^\infty((0,T);L^2(\O_\e^g))}+\e\|\nabla p_\e\|_{L^2((0,T)\X\O_\e^g)}&\leq C\e^{3/2},\\
					\|e(U_\e)\|_{L^\infty((0,T); L^2(\O_\e))}& \leq C\e^{{3\over 2}}.
				\end{aligned}				
			\end{equation}
			The constant(s) are independent of $\e$.
		\end{lemma}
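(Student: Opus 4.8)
The plan is a standard Biot-type energy estimate, with careful tracking of the powers of $\e$. Since $(U_\e,p_\e)\in\GU_\e$ is assumed to be the (unique) weak solution, $U_\e\in H^1((0,T);H^1(\O_\e)^3)$ and $U_\e(t)=0$ on $\Lambda_{0,\e}$, so $\partial_t U_\e(t)\in H^1(\O_\e)^3$ vanishes on $\Lambda_{0,\e}$ and is an admissible test function in the first equation of \eqref{MainPro}. First I would take $v=\partial_t U_\e$ there and $\phi=p_\e$ in the second equation, and add. The two coupling (divergence) terms $-\int_{\O_\e^g}\alpha p_\e\,\nabla\cdot\partial_t U_\e\,dx$ and $\int_{\O_\e^g}\alpha\,\partial_t(\nabla\cdot U_\e)\,p_\e\,dx$ cancel exactly, producing
\[
\frac12\frac{d}{dt}\Big(\int_{\O_\e}\GA_\e e(U_\e):e(U_\e)\,dx+c\int_{\O_\e^g}p_\e^2\,dx\Big)+\e^2\int_{\O_\e^g}K\nabla p_\e\cdot\nabla p_\e\,dx=\int_{\O_\e}f_\e\cdot\partial_t U_\e\,dx+\int_{\O_\e^g}h_\e p_\e\,dx .
\]
Integrating over $(0,t)$ and using coercivity \eqref{CoeCon}, the $L^\infty$-bound on $\GA_\e$, positive-definiteness of $K$, and the initial condition $p_\e(0)=0$, leaves on the left $c_0\|e(U_\e(t))\|^2_{L^2(\O_\e)}+c\|p_\e(t)\|^2_{L^2(\O_\e^g)}+c_K\e^2\int_0^t\|\nabla p_\e\|^2_{L^2(\O_\e^g)}\,ds$ (up to harmless constants) and, on the right, the initial term $\|e(U_\e(0))\|^2_{L^2(\O_\e)}$ plus the two forcing contributions.

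To handle $e(U_\e(0))$ I would test the first equation at $t=0$ with $v=U_\e(0)$: since $p_\e(0)=0$, coercivity and the Korn-type inequalities \eqref{KornPUD1} of Lemma \ref{DeComKL1} (legitimate because $U_\e(0)=0$ on $\Lambda_{0,\e}$), together with the scaling $f_\e=\e f_\alpha\Ge_\alpha+\e^2 f_3\Ge_3$ and $\|f(0)\|_{L^2(\O_\e)}\le C\e^{1/2}\|f(0)\|_{L^2(\o)}$, give $\|e(U_\e(0))\|_{L^2(\O_\e)}\le C\e^{3/2}$. For the forcing I would integrate by parts in time,
\[
\int_0^t\!\!\int_{\O_\e}f_\e\cdot\partial_t U_\e\,dx\,ds=\int_{\O_\e}f_\e(t)\cdot U_\e(t)\,dx-\int_{\O_\e}f_\e(0)\cdot U_\e(0)\,dx-\int_0^t\!\!\int_{\O_\e}\partial_t f_\e\cdot U_\e\,dx\,ds ,
\]
which is licit since $f\in H^1([0,T];L^2(\o)^3)$ by \eqref{AF01}. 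The key estimate, at each time level, is
\[
\Big|\int_{\O_\e}f_\e\cdot U_\e\,dx\Big|\le \e\|f_\alpha\|_{L^2(\O_\e)}\|(U_\e)_\alpha\|_{L^2(\O_\e)}+\e^2\|f_3\|_{L^2(\O_\e)}\|(U_\e)_3\|_{L^2(\O_\e)}\le C\e^{3/2}\,\|e(U_\e)\|_{L^2(\O_\e)},
\]
where I use \eqref{KornPUD1}, i.e. $\|(U_\e)_\alpha\|_{L^2(\O_\e)}\le C\|e(U_\e)\|_{L^2(\O_\e)}$ and $\|(U_\e)_3\|_{L^2(\O_\e)}\le C\e^{-1}\|e(U_\e)\|_{L^2(\O_\e)}$ — so the $\e^{-1}$ loss on the transverse component is precisely killed by the $\e^2$ in front of $f_3$ — together with $\|f_\alpha\|_{L^2(\O_\e)},\|f_3\|_{L^2(\O_\e)}\le C\e^{1/2}$; the same bound holds with $f_\e$ replaced by $\partial_t f_\e$. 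For the pressure source, $\big|\int_{\O_\e^g}h_\e p_\e\,dx\big|\le\|h_\e\|_{L^2(\O_\e^g)}\|p_\e\|_{L^2(\O_\e^g)}$ with $\|h_\e\|_{L^2(\O_\e^g)}\le C\e^{3/2}$ from the scaling $h_\e=\e h$ and \eqref{AF01}.

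Setting $y(t)=\|e(U_\e(t))\|^2_{L^2(\O_\e)}+\|p_\e(t)\|^2_{L^2(\O_\e^g)}$, the above yields
\[
y(t)+\e^2\!\int_0^t\!\|\nabla p_\e\|^2_{L^2(\O_\e^g)}\,ds\le C\e^3+C\e^{3/2}\big(\sqrt{y(t)}+\sqrt{y(0)}\big)+C\!\int_0^t\!\big(\e^{3/2}\sqrt{y(s)}+\e^3\big)\,ds .
\]
Absorbing the $\e^{3/2}\sqrt{y}$ terms by Young's inequality ($\e^{3/2}\sqrt{y}\le\tfrac14 y+C\e^3$) and using $y(0)\le C\e^3$, this becomes $y(t)+\e^2\int_0^t\|\nabla p_\e\|^2_{L^2(\O_\e^g)}\,ds\le C\e^3+C\int_0^t y(s)\,ds$, so Gronwall's lemma gives $y(t)\le C\e^3$ for all $t\in(0,T)$, and feeding this back gives $\e^2\int_0^T\|\nabla p_\e\|^2_{L^2(\O_\e^g)}\,ds\le C\e^3$. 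Taking square roots yields exactly the bounds of \eqref{Est01}. The only real difficulty is the $\e$-bookkeeping: one must marry the anisotropic Korn inequality of Lemma \ref{DeComKL1} (genuine $\e^{-1}$ loss in the third component, $\e^{1/2}$ gain from integrating over $(-\e,\e)$) with the anisotropic load scaling \eqref{AF02}, and treat the time-boundary term from integrating $f_\e\cdot\partial_t U_\e$ by parts together with the $t=0$ elliptic estimate; once the powers are lined up, Young's inequality and Gronwall's lemma are routine.
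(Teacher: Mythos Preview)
Your proposal is correct and follows essentially the same energy-method route as the paper: test with $(\partial_t U_\e, p_\e)$, integrate by parts in time on the forcing, match the anisotropic load scaling against the plate Korn inequality \eqref{KornPUD1}, then Young and Gronwall. The only minor difference is that the paper first splits $U_\e=\Gw_\e+\Gu_\e$ via Lemma~\ref{lem5} before applying \eqref{KornPUD1} to $\Gw_\e$ (and \eqref{410} to $\Gu_\e$), whereas you apply \eqref{KornPUD1} directly to $U_\e$; your shortcut is legitimate since $U_\e$ itself vanishes on $\Lambda_{0,\e}$, and your separate elliptic estimate at $t=0$ is a clean way to handle the initial term that the paper simply drops by taking $U_\e(0)=0$.
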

  \begin{proof}
  We derive the estimates in the subsequent steps:\\
  {\bf Step 1:} Testing the weak form \eqref{MainPro} with $\left(\partial_{t} U_{\e}, p_{\e}\right)$, we get

$$
\begin{aligned}
&\int_{\O_\e}\GA_\e e\left(U_{\e}\right): e\left(\partial_{t} U_{\e}\right) \mathrm{d} x-\int_{\O_{\e}^{g}} \alpha p_{\e} \nabla \cdot \partial_{t} U_{\e} \mathrm{d} x 
= \int_{\O_{\e}} f_\e \cdot \partial_{t} U_{\e} \mathrm{d} x,\\
&\int_{\O_{\e}^{g}} \partial_{t}\left(c p_{\e}+\alpha \nabla \cdot U_{\e}\right) p_{\e} \mathrm{d} x+\int_{\O_{\e}^{g}} \e^{2} K \nabla p_{\e} \cdot \nabla p_{\e} \mathrm{d} x 
=  \int_{\O_{\e}^{g}} h_\e p_{\e} \mathrm{d} x .
\end{aligned}
$$
We integrate the above equations over time to get 
$$
\begin{gathered}
\int_{0}^{t} \int_{\O_\e}\GA_\e e\left(U_{\e}\right): e\left(\partial_{t} U_{\e}\right) \mathrm{d} x \mathrm{~d} s-\int_{0}^{t} \int_{\O_{\e}^{g}} \alpha p_{\e} \nabla \cdot \partial_{t} U_{\e} \mathrm{d} x \mathrm{~d} s=\int_{0}^{t}\int_{\O_{\e}} f_\e \cdot \partial_{t} U_{\e} \mathrm{d} x\mathrm{d} s, \\
\int_{0}^{t} \int_{\O^g_{\e}} \partial_{t}\left(c p_{\e}+\alpha \nabla \cdot U_{\e}\right) p_{\e} \mathrm{d} x \mathrm{d}s+\int_{0}^{t} \e^{2} K\left\|\nabla p_{\e}(s)\right\|_{L^{2}\left(\O_{\e}^{g}\right)}^{2} \mathrm{~d} s=\int_{0}^{t} \int_{\O_{\e}^{g}} h_\e p_{\e} \mathrm{d} x \mathrm{~d} s,
\end{gathered}
$$
which give
$$
\begin{aligned}
& c \int_{0}^{t} \int_{\O_{\e}^{g}} \partial_{t} p_{\e} p_{\e} \mathrm{d} x \mathrm{~d} s+\int_{0}^{t} \int_{\O_\e}\GA_\e e\left(U_{\e}\right): e\left(\partial_{t} U_{\e}\right) \mathrm{d} x \mathrm{~d} s+\e^{2} c_K \int_{0}^{t}\left\|\nabla p_{\e}(s)\right\|_{L^{2}\left(\O_{\e}^{g}\right)}^{2} \mathrm{~d} s \\
&\hskip 15mm \leq \int_{0}^{t} \int_{\O_{\e}^{g}} h_\e p_{\e} \mathrm{d} x \mathrm{~d} s+\int_{0}^{t}\int_{\O_{\e}} f_\e \cdot \partial_{t} U_{\e} \mathrm{d} x\mathrm{d} s,
\end{aligned}
$$
since the matrix $K$ is positive definite with constant $c_K>0$.\\
Using the coercivity \eqref {CoeCon} of $\GA$, we get
\begin{multline*}
c\left\|p_{\e}\right\|_{L^{2}\left(\O_{\e}^{g}\right)}^{2}+c_0\left\|e\left(U_{\e}\right)\right\|_{L^{2}(\O_\e)}^{2}+2 \e^{2} c_{K} \int_{0}^{t}\left\|\nabla p_{\e}(s)\right\|_{L^{2}\left(\O_{\e}^{g}\right)}^{2} \mathrm{~d} s\\
 \leq c\left\|p_{\e}(0)\right\|_{L^{2}\left(\O_{\e}^{g}\right)}^{2}+c_0\left\|e\left(U_{\e}\right)(0)\right\|_{L^{2}(\O_\e)}^{2}+2 \int_{0}^{t} \int_{\O_{\e}^{g}} h_\e p_{\e} \mathrm{d} x \mathrm{~d} s +
 \int_{0}^{t} \int_{\O_{\e}} f_\e \cdot \partial_{t} U_{\e} \mathrm{d} x \mathrm{~d} s
\end{multline*}
Using the fact that $p_{\e}(0)=0$ and $U_{\e}(0)=0$, we obtain
$$
\begin{aligned}
& c\left\|p_{\e}\right\|_{L^{2}\left(\O_{\e}^{g}\right)}^{2}+c_0\left\|e\left(U_{\e}\right)\right\|_{L^{2}(\O_\e)}^{2}+2 \e^{2} c_{K} \int_{0}^{t}\left\|\nabla p_{\e}\right\|_{L^{2}\left(\O_{\e}^{g}\right)}^{2} \mathrm{~d} s \\
 &\hskip 10mm \leq 2 \int_{0}^{t} \int_{\O_{\e}^{g}} h_\e p_{\e} \mathrm{d} x \mathrm{~d} s
+ \int_{0}^{t} \int_{\O_{\e}} f_\e \cdot \partial_{t} U_{\e} \mathrm{d} x \mathrm{~d} s
\end{aligned}$$
{\bf Step 2:} 
		We estimate the right-hand side. First observe that by integration by parts, we have
		\begin{equation}\label{oo01}
			\begin{aligned}
				\int_0^t\int_{\O_\e}f_\e\cdot\partial_tU_\e\,dxds=-\int_0^t\int_{\O_\e}\partial_t f_\e\cdot U_\e\,dxds+\int_{\O_\e}f_\e\cdot U_\e\,dx.
			\end{aligned}
		\end{equation}
			Using the Lemma \ref{lem5}, we have an expression like \eqref{ExDis} for $U_\e$, i.e. $U_\e=\Gw_\e+\Gu_\e$ in $(0,T)\X \O_\e$ with $\Gu_\e=0$ in $(0,T)\X (\Gamma_\e\cup{\O^f_\e})$ and using Lemma \ref{DeComKL1}, we have decomposition for $\Gw_\e$ with the Korn-type estimates \eqref{KornPUD1}. Then, these estimates along with the above equality \eqref{oo01} and the expression \eqref{AF02} for $f_\e$, give 
			$$\begin{aligned}
				-\int_0^t\int_{\O_\e}\partial_t f_\e\cdot U_\e\,&dxds+\int_{\O_\e}f_\e\cdot U_\e\,dx\\
				&=-\int_0^t\int_{\O_\e}\partial_t f_\e\cdot (\Gw_\e+\Gu_\e)\,dxds+\int_{\O_\e}f_\e\cdot (\Gw_\e+\Gu_\e)\,dx\\
				&\leq C\int_0^t\|\e\partial_t f\|_{L^2(\O_\e)}\left(\|\Gw_{\alpha, \e}\|_{L^2(\O_\e)}+\e\|\Gw_{\e,3}\|_{L^2(\O_\e)}+\|\Gu_\e\|_{L^2(\O_\e)}\right)\,ds\\
				&\hskip 15mm+C\|\e f\|_{L^2(\O_\e)}\left(\|\Gw_{\alpha,\e}\|_{L^2(\O_\e)}+\e\|\Gw_{\e,3}\|_{L^2(\O_\e)}+\|\Gu_\e\|_{L^2(\O_\e)}\right)\\
				&\leq C\int_0^t\|\e\partial_t f\|_{L^2(\O_\e)} \big(\|e(\Gw_{\e})\|_{L^2(\O_\e)}+\e\|e(\Gu_\e)\|_{L^2(\O_\e)}\big)\,ds\\
                &\hskip 15mm+C\|\e f\|_{L^2(\O_\e)}\big(\|e(\Gw_\e)\|_{L^2(\O_\e)}+\e\|e(\Gu_\e)\|_{L^2(\O_\e)}\big),
			\end{aligned}$$
			which along with the estimate \eqref{48} give
            \begin{multline*}
                -\int_0^t\int_{\O_\e}\partial_t f_\e\cdot U_\e\,dxds+\int_{\O_\e}f_\e\cdot U_\e\,dx\\
                \leq C\e^{3/2}\int_0^t\|\partial_t f\|_{L^2(\o)} \|e(U_{\e})\|_{L^2(\O_\e)}\,ds+C\e^{3/2}\| f\|_{L^2(\o)}\|e(U_\e)\|_{L^2(\O_\e)}.
            \end{multline*}
            Then, applying Young's inequality give
			\begin{multline*}
				-\int_0^t\int_{\O_\e}\partial_t f_\e\cdot U_\e\,dxds+\int_{\O_\e}f_\e\cdot U_\e\,dx\\
				\leq C\e^{3} \int_0^t\|\partial_tf\|^2_{L^2(\o)}\,ds+C\int_{0}^t\|e(U_\e)\|^2_{L^2(\O_\e)}\,ds
                +C\e^{3}+C\|f\|^2_{L^2(\o)}\|e(U_\e)\|^2_{L^2(\O_\e)}. 
			\end{multline*}
%
   {\bf Step 3:} 	So, the estimates from Steps 1 and 2 along with \eqref{48} and the assumption of forces \eqref{AF01} give
			\begin{multline*}
				c\|p_\e\|^2_{L^2(\O_\e^g)}+c_0\|e(U_\e)\|^2_{L^2(\O_\e)}+2\e^2c_K\int_{0}^t\|\nabla p_\e\|^2_{L^2(\O_\e^g)}ds\\
				\leq C\int_0^t\big(\|e(U_\e)\|^2_{L^2(\O_\e)}+\|p_\e\|^2_{L^2(\O_\e^g)}\big)\,ds+C\|f\|^2_{L^2(\o)}\|e(U_\e)(t)\|^2_{L^2(\O_\e)}\\ +C\e^{3}\int_{0}^t\big(\|h\|^2_{L^2(\o^g)}+\|\partial_t f\|^2_{L^2(\o)}\big)\,ds+C\e^3.
			\end{multline*}
            Using the assumption \eqref{AF01} on forces, we can choose that
            $$C\|f\|^2_{L^2(\o)}\leq CK_1\leq {c_0\over 2},$$
            then, we have used \eqref{AF01} to obtain
            \begin{multline*}
				c\|p_\e\|^2_{L^2(\O_\e^g)}+c_1\|e(U_\e)\|^2_{L^2(\O_\e)}+2\e^2c_K\int_{0}^t\|\nabla p_\e\|^2_{L^2(\O_\e^g)}ds\\
				\leq C\int_0^t\big(\|e(U_\e)\|^2_{L^2(\O_\e)}+\|p_\e\|^2_{L^2(\O_\e^g)}\big)\,ds+C_T\e^3.
			\end{multline*}
            Let us set
$$
\Phi(t) := \|p_\varepsilon(t)\|^2_{L^2(\Omega^g_\varepsilon)} + \|e(U_\varepsilon(t))\|^2_{L^2(\Omega_\varepsilon)},\quad \forall\,t\in(0,T),
$$
then from the estimates, we have the inequality:
$$
\Phi(t) \leq C_1 \int_0^t \Phi(s) \, ds + C_T\varepsilon^3.
$$
So, applying Gronwall's inequality gives 
$$
\Phi(t) \leq C_T \varepsilon^3 e^{C_1 t} \leq C_T \varepsilon^3.$$
All the above constant(s) are independent of $\e$ (but may depend on $T$). So, we obtain \eqref{Est01}.\\
This completes the proof.
  \end{proof}
		Now, we prove that a unique solution exists $\{U_\e,p_\e\}\in\GU_\e$ of the problem \eqref{MainPro}. For that, we construct a Galerkin approximation $U_n$ of $U_\e$ and $p_n$ of $p_\e$ and show its convergence to $\{U_\e,p_\e\}$ when $n\to+\infty$. This will justify the existence of unique solution $\{U_\e,p_\e\}\in\GU_\e$ of the problem \eqref{MainPro}.
		\begin{proof}[Proof of Theorem \ref{Th01}]
		{\bf Step 1.}	Observe that from \cite{evans}, we have that there exist an orthogonal basis $\{\xi_i\}_{i\in\N}$ of $\GV_\e$ with respect to the inner product
			$$(w,v)_{\GV_\e}=\int_{\O_\e}\GA_\e e(w)\,:\,e(u)\,dx,$$
			where 
			$\GV_\e=\{u\in H^1(\O_\e)^3\,|\, u=0,\quad\text{on $\Lambda_{0,\e}$}\}$.\\
        Similarly, we have an orthogonal basis $\{\eta_i\}_{i\in\N}$ of $H^1(\O^g_\e)$ which is an orthonormal  in $L^2(\O^g_\e)$.\\
			Then  the Galerkin approximation is
			\begin{equation}\label{52}
				\begin{aligned}
					\int_{\O_\e}\GA_\e e(U_n):e(v)\,dx-\int_{\O_\e^g}\alpha p_n\nabla\cdot v\,dx=\int_{\O_\e}f_\e\cdot v\,dx,\\
					\int_{\O_\e^g}(c\partial_tp_n+\alpha\nabla\cdot \partial_tU_n)\phi\,dx+\int_{\O_\e^g}\e^2K\nabla p_n\cdot \nabla\phi\,dx=\int_{\O_\e^g}h_\e\phi\,dx,\\
					U_n(0)=0,\quad \text{in $\O_\e$},\quad p_n(0)=0,\quad \text{in $\O^g_\e$},
				\end{aligned}
			\end{equation}
			for every $v\in \GV_n$, $\phi\in \GH_n$ and $t\in (0,T)$ where
			$$\begin{aligned}
				\GV_n=\text{span}\left\{\xi_1,\ldots,\xi_n\right\},\quad \GH_n=\text{span}\left\{\eta_1,\ldots,\eta_n\right\}.
			\end{aligned}$$
			Therefore,
			$$U_n=\sum_{i=1}^n a_i^n(t)\xi_i(x),\quad p_n=\sum_{i=1}^n b_i^n(t)\eta_i(x),$$
			with $a_{i}^n(0)=0$ and $b_i^n(0)=0$ for $i=1,\ldots,n$ due to the initial conditions on $U_n$ and $P_n$.\\
			Using the assumptions on $\GA_\e$ and Korn's inequality, we claim that the induced linear mapping $\BE_\e\,:\, \GV_\e\to H^{-1}(\O_\e)$ given by
			$$(\BE_\e(w),v)_{\GV_\e}=\int_{\O_\e}\GA_\e e(w)\,:\,e(v)\,dx,$$
			is a homeomorphism.     The linearity and continuity of $\BE_\e(w)$ are clear from the definition. We get the injectivity using the positive definiteness of $\GA_\e$ and Poincar\'e inequality. Using the Lax-Milgram theorem, we have the surjectivity and the continuity of the inverse.\\[1mm]
		{\bf Step 2.}	Let us define the following $n\X n$ matrices by 
			$$\GB_{ij}=\int_{\O_\e}\GA_\e e(\xi_i)\,.\,e(\xi_j)\,dx,\quad \GC_{ij}=\int_{\O^g_\e}\eta_i\nabla\cdot \xi_j\,dx,\quad \GD_{ij}=\e^2\int_{\O^g_\e}\nabla \eta_i\cdot\nabla\eta_j\,dx$$
			for $i,j\in \{1,\ldots,n\}$. Then, using the definition of $U_n$ and $p_n$, we obtain
			\begin{equation}\label{Ex02}
				\begin{aligned}
					\GB\Ga(t)-\alpha\GC\Gb(t)&=\GF_n(t),\\
					c{d\over dt}\Gb(t)+\alpha\GC^T{d\over \partial t}\Ga(t)+\GD\Gb(t)&=\GG_n(t),\quad \Gb(0)=0,
				\end{aligned}
			\end{equation}
			where $\Ga(t)=\left[a_1^n,\ldots,a_n^n(t)\right]^T$ and $\Gb(t)=\left[b_1^n,\ldots,b_n^n\right]^T$, with
			$$\GF_n(t)=\int_{\O_\e}f_\e\cdot U_n\,dx,\quad \GG_n(t)=\int_{\O^g_\e}h_\e p_n\,dx.$$
			Since, $\BE_\e$ is a homeomorphism, we have the invertibility of $\GB$, so after inserting \eqref{Ex02}$_1$ in \eqref{Ex02}$_2$, we obtain a linear system of ordinary differential equations for $\Gb$,
		\begin{equation}\label{ODE1}
			(c\GI_3+\alpha^2\GC^T\GB^{-1}\GC){d\over dt}\Gb(t)+\GD\Gb(t)=\GG_n-\alpha\GC^T\GB^{-1}{d\over dt}\GF_n(t),\quad \Gb(0)=0.
			\end{equation}
			Since $\BE_\e$ is a homeomorphism, we have coercivity for $\GB^{-1}$, so we get
            $$\Big((\GC^T\GB^{-1}\GC)\zeta,\zeta\Big)\geq k|\GC\zeta|^2_2,\quad\text{for all $\zeta\in\R^n$},$$
			which implies
            $$\Big(\big(c\GI_3+\alpha^2\GC^T\GB^{-1}\GC\big)\zeta,\zeta\Big)\geq k_0|\GC\zeta|^2_2,\quad\text{for all $\zeta\in\R^n$}.$$
           By applying the Lax–Milgram lemma in the Hilbert space setting (See Theorem 5 of Section 7.1 in \cite{evans}), it follows that \eqref{ODE1} there exists a unique weak solution $\Gb \in H^2(0, T)^n$. So, we have $H^2$-regularity in time for $U_n$ and $p_n$.\\[1mm]
			{\bf Step 3.} We derive a priori estimate uniform in $n$. Testing \eqref{52} by $(\partial_tU_n,p_n)$, we have the following using Step 1 of Lemma \ref{Lem06} , we have
            \begin{multline*}
                c\|p_n\|^2_{L^2(\O_\e^g)}+c_0\|e(U_n)\|^2_{L^2(\O_\e)}+c(\e)\int_0^t \|\nabla p_n\|^2_{L^2(\O^g_\e)}\,ds\\
                \leq 2\int_0^t \int_{\O^g_\e} h_\e p_n\,dx ds+\int_0^t \int_{\O_\e} f_\e\cdot \partial_t U_n\,dx ds.
            \end{multline*}
            Proceeding as in Step 2 of Lemma \ref{Lem06} along with Korn's inequality, we have
            \begin{multline*}
                \int_0^t\int_{\O_\e}f_\e\cdot\partial_tU_n\,dxds=-\int_0^t\int_{\O_\e}\partial_t f_\e\cdot U_n\,dxds+\int_{\O_\e}f_\e\cdot U_n\,dx\\
                \leq C(\e)\int_0^t \|\partial_tf_\e\|_{L^2(\O_\e)}\|e(U_n)\|_{L^2(\O_\e)}\,ds+C(\e)\|f_\e\|_{L^2(\O_\e)}\|e(U_n)\|_{L^2(\O_\e)}.
            \end{multline*}
            Now, using Young's inequality and proceeding as in Step 3 of Lemma \ref{Lem06}, we get
			$$\|U_n\|_{L^\infty(S;H^1(\O_\e))}+\|\nabla p_n\|_{L^2(S\X \O^g_\e)}+\|p_n\|_{L^\infty(S;L^2(\O^g_\e))}\leq C(T,\e).$$
   The constant $C(T,\e)$ depends on $T$ and $\e$ but is independent of $n$.\\
	So, passing to the limit $n\to +\infty$ in \eqref{52} gives the uniqueness and we have the $H^1$-regularity in time due to $H^2$-regularity in time for $U_n$ and $p_n$.
		\end{proof}
\section{Asymptotic analysis for all $t\in (0,T)$}\label{Sec07}
\subsection{The unfolding operators}
We achieve the convergences using the re-scaling unfolding operator $\Pi_\e$ for simultaneous homogenization dimension reduction in $\O_\e$ along with the unfolding operator $\Te$ for homogenization in $\o$. For detailed properties of the operators see \cite{CDG}.
\begin{definition}
Let $\psi$ be a measurable function on $\O_\e$, then the unfolding re-scaling operator $\Pi_\e$ is defined as 
	\begin{equation}\label{RUO1}
		\Pi_\e(\psi) (x',y) \doteq \begin{aligned}
			&\psi \left(\e\left[{x'\over \e}\right] + \e y\right) \qquad &&\hbox{for a.e. }\; (x',y)\in  \o\times \Yc.
		\end{aligned}
	\end{equation}
Similarly, let $\phi$ be a measurable function on $\o$, the periodic unfolding operator $\Te$ is defined as 
	$$ \Te(\phi) (x',y') \doteq \begin{aligned}
		&\psi \left(\e\left[{x'\over \e}\right] + \e y'\right) \qquad &&\hbox{for a.e. }\; (x',y')\in  \o\times Y.
	\end{aligned}$$
\end{definition} 
The operators $\Pi_\e$ and $\Te$ are  continuous linear operators  from $L^2(\O_\e)$ into $L^2(\o\X \Yc)$ and $L^2(\o)$ into $L^2(\o\X Y)$ respectively satisfying 
\begin{equation}\label{EQ722}
\begin{aligned}
    \|\Pi_\e(\psi)\|_{L^2(\o\X\Yc)}&\leq{C\over \sqrt{\e}}\|\psi\|_{L^2(\O_\e)}\quad &&\text{for every}\quad \psi\in L^2(\O_\e),\\
    \|\Te(\phi)\|_{L^2(\o\X Y)}&\leq C\|\phi\|_{L^2(\o)}\quad&&\text{for every}\quad \phi\in L^2(\o).
\end{aligned}
\end{equation}
The constant(s) $C$ do not depend on $\e$.\\
Note that for functions defined in $\o$, one has
	$$\Pi_\e(\phi)(x',y',0)=\Te(\phi)(x',y')=\phi \left(\e\left[{x'\over \e}\right] + \e y'\right)\quad \text{for every}\quad \phi\in L^2(\o).$$
Moreover, for every $\psi\in H^1(\O_\e)$ one has (see \cite[Proposition 1.35]{CDG})
\begin{equation}\label{UF01}
    \nabla_y\Pi_\e(\psi)(x',y)=\e\Pi_\e(\nabla \psi)(x',y) \quad \hbox{a.e. in}\quad \o\X\Yc.
\end{equation} 
    \subsection{Asymptotic limit of the strain tensor and pressure}
Let $(U_\e,p_\e)\in \GU_\e$ be the solution of the microscopic problem \eqref{MainPro} then, we have from \eqref{Est01} for all $t\in (0,T)$
$$ 	\|p_\e\|_{L^2(\O_\e^g))}+\|e(U_\e)\|_{L^2(\O^f_\e)^3)}+\|e(U_\e)\|_{L^2(\O_\e^g))}+\e\|\nabla p_\e\|_{L^2(\O_\e^g)}\leq C\e^{{3\over 2}}.$$
Using Lemma \ref{lem5}, we obtain $\Gw_\e$ and $\Gu_\e$, then using Lemma \ref{DeComKL1} and \ref{DeCom2}, we decompose $\Gw_\e$ and $\Gu_\e$ respectively.
As a consequence of the above estimates with Lemmas \ref{lem5}, \ref{DeComKL1}, \ref{DeCom2} and \ref{Lem06}, we have
\begin{equation}\label{71}
	\begin{aligned}
			\|e(\Gu_\e)\|_{L^2(\O_\e)}&\leq C\e^{{3/ 2}},\\
			\|\Uc_\e\|_{L^2(\o)}+\e\|\partial_\alpha \Uc_\e\|_{L^2(\o)}&\leq C\e^2,\\
			\|\wo{u}_\e\|_{L^2(\O_\e)}+\e\|\nabla\wo{u}_\e\|_{L^2(\O_\e)}&\leq C\e^{5/2},\\
			\|\Wc_{\alpha,\e}\|_{H^1(\o)}+\e(\|\Wc_{3,\e}\|_{H^1(\o)}+\|\Rc_{\alpha,\e}\|_{H^1(\o)})&\leq C\e,\\
			\|\partial_\alpha\Wc_{3,\e}+\Rc_{\alpha,\e}\|_{L^2(\o)}&\leq C\e,\\
			\|\wo{w}_\e\|_{L^2(\O_\e)} +\e\|\nabla\wo{w}_\e\|_{L^2(\O_\e)} &\leq C\e^{5/2},\\
			\|{p}_\e\|_{L^2(\O_\e^g)}+\e\|\nabla {p}_\e\|_{L^2(\O_\e^g)}&\leq C\e^{3/2}.
	\end{aligned}
\end{equation}
Then, from the above inequalities along \eqref{EQ722}--\eqref{UF01}, we get
\begin{equation}\label{72}
	\begin{aligned}
				\|\Pi_\e(\wo{u}_\e)\|_{L^2(\o\X\Yc)}+ \|\nabla_y\Pi_\e( \wo{u}_\e)\|_{L^2(\o\X\Yc)} &\leq C\e^2,\\
		\|\Pi_\e(\wo{w}_\e)\|_{L^2(\o\X\Yc)} +\|\nabla_y\Pi_\e(\wo{w}_\e)\|_{L^2(\o\X\Yc)}&\leq C\e^{2},\\
		\|\Pi_\e({p}_\e)\|_{L^2(\o\X\Yc^g)}+ \|\nabla_y\Pi_\e( {p}_\e)\|_{L^2(\o\X\Yc^g)} &\leq C\e.
	\end{aligned}
\end{equation}
We also have the following initial conditions
\begin{equation}\label{ICL01}
    \Wc_\e(0)=\Rc_\e(0)=\Uc_\e(0)=0,\quad\text{in $\o$}\quad\text{and}\quad \wo{w}_\e(0)=\wo{u}_\e(0)=0,\quad\text{in $\O_\e$}.
\end{equation}
Let us set
$$\begin{aligned}
	H^1_{per}(\Yc)&\doteq \left\{\phi\in H^1(\Yc)\,|\,\phi(y+n\Ge_\alpha)=\phi(y),\quad\text{a.e. in $\Yc$, $\forall\,n\in\Z$}\right\},\\
	\GH_{per}^1(\Yc)&\doteq\left\{\phi\in H^1_{per}(\Yc)|\phi=0\quad\text{a.e. in $\Gamma\cup {\Yc^f}$}\right\},\\
	H^1_{per,0}(\Yc)&\doteq \left\{\phi\in H_{per}^1(\Yc)\,|\,{1\over |\Yc|}\int_{\Yc}\phi\,dy=0\right\}.
\end{aligned}$$
\begin{lemma}\label{Lem08}
	There exist a subsequence of $\{\e\}$ (still denoted by $\{\e\}$) and $\Wc_\alpha\in H^1(\o)$, $\Wc_3\in H^2(\o)$ such that
	\begin{equation}\label{CFU1}
		\begin{aligned}
			{1\over \e}\Wc_{\alpha,\e}&\rightharpoonup \Wc_\alpha\quad &&\text{weakly in}\quad H^1(\o),\\
			\Wc_{3,\e}&\to \Wc_3\quad &&\text{weakly in}\quad H^1(\o),\\
		\Rc_{\alpha,\e} & \rightharpoonup -\partial_\alpha\Wc_3\quad&&\text{weakly in $H^1(\o)$},\\
			{1\over \e}\Uc_\e& \rightharpoonup 0\quad&&\text{weakly in $H^1(\o)^3$}. 
		\end{aligned}
	\end{equation}
				The fields	$\Wc_\alpha$, $\Wc_3$ satisfy the following boundary and initial conditions:
	$$ \Wc_\alpha=0\quad \text{a.e. on}\quad \partial\o,\quad \Wc_3=\partial_\alpha \Wc_3=0\quad\text{a.e. on}\;\; \partial\o,\quad\Wc(0)=0,\quad\text{in $\o$}.$$
	Moreover, there exist $\wh{W}_\alpha\in L^2(\o; H^1_{per,0}(Y))$, $\wh{R}_\alpha\in L^2(\o; H^1_{per,0}(Y))$ and $\wh U\in L^2(\o; H^1_{per,0}(Y))^3$ such that
	\begin{equation}\label{84}
		\begin{aligned}
			{1\over \e}\Te(\Wc_{\alpha,\e}) &\rightharpoonup \Wc_\alpha,\quad &&\text{weakly in $L^2(\o;H^1(Y))$},\\
			{1\over \e}\Te({\partial\Wc_{\beta,\e}\over \partial x_\alpha}) &\rightharpoonup {\partial \Wc_\beta\over \partial x_\alpha}+{\partial \wh{W}_\beta\over \partial y_\alpha},\quad &&\text{weakly in $L^2(\o\X Y)$},\\
			\Te(\Wc_{3,\e}) &\to \Wc_3,\quad &&\text{strongly in $L^2(\o;H^1(Y))$},\\
			\Te({\partial\Wc_{3,\e}\over \partial x_\alpha}) &\to {\partial \Wc_3\over \partial x_\alpha},\quad &&\text{strongly in $L^2(\o\X Y)$},\\
			\Te({\partial\Rc_{\beta,\e}\over \partial x_\alpha}) &\rightharpoonup -{\partial \Wc_3\over \partial x_\alpha\partial x_\beta}+{\partial \wh{R}_\beta\over \partial y_\alpha},\quad &&\text{weakly in $L^2(\o\X Y)$},\\
			{1\over \e}\Te({\partial\Uc_{\e}\over \partial x_\alpha}) &\rightharpoonup {\partial \wh{U}\over \partial y_\alpha},\quad &&\text{weakly in $L^2(\o\X Y)^3$}.
		\end{aligned}
	\end{equation}
	Furthermore, there exist $\wo{w} \in L^2(\o;H^1_{per}(\Yc))^3$, $p_0\in L^2(\o;H^1_{per}(\Yc^g))$ with ${p}_0(0)=0$ on $\o\X\Yc^g$ and ${u} \in L^2(\o;\GH_{per}^1(\Yc))^3$ such that
	\begin{equation}\label{CFfu1}
		\begin{aligned}
			{1\over \e^2}\Pi_\e(\wo{w}_\e) &\rightharpoonup \wo{w} \quad \text{weakly in}\quad L^2(\o;H^1(\Yc))^3,\\
			{1\over \e}\Pi_\e({p}_\e) &\rightharpoonup {p}_0 \quad \text{weakly in}\quad L^2(\o;H^1(\Yc^g)),\\
			{1\over \e^2}\Pi_\e\left(\wo{u}_\e\right) &\rightharpoonup u\quad\text{weakly in}\quad L^2(\o;H^1(\Yc))^3,
		\end{aligned}
	\end{equation}
	and
	\begin{equation}\label{CFgu1}
		\begin{aligned}
			{1\over \e}\Pi_\e\left(U_{\alpha,\e}\right)&\rightharpoonup \Wc_\alpha-y_3{\partial \Wc_3\over \partial x_\alpha}\quad &&\text{weakly in}\quad L^2(\o;H^1(\Yc)),\\
			\Pi_\e(U_{3,\e})&\to \Wc_3\quad &&\text{strongly in}\quad L^2(\o;H^1(\Yc)).
		\end{aligned}
	\end{equation}
	Finally, there exist  $\wo{\Gu}\in L^2(\o; H^1_{per,0}(\Yc))^3$ with $\wo{\Gu}(0)=0$ in $\o\X \Yc$ such that
		\begin{equation}\label{CFeu1}
		\begin{aligned}
			{1\over \e}\Pi_\e\left(e(U_\e)\right)&\rightharpoonup E(\Wc)+e_y(\wo{\Gu})\quad &&\text{weakly in}\quad L^2(\o\X \Yc)^{3\X3},\\
		  {1\over \e}\Pi_\e\left(\nabla\cdot U_\e\right)&\rightharpoonup \nabla\cdot \Wc_L+\nabla_y\cdot\wo{\Gu} \quad &&\text{weakly in}\quad L^2(\o\X \Yc).
		\end{aligned}
	\end{equation}
	where $E(\Wc)$ denotes the symmetric matrix
	$$
	E(\Wc)=
	\begin{pmatrix}
		e_{11}(\Wc)-y_3D_{11}(\Wc_3)&*&*\\
		e_{12}(\Wc)-y_3D_{12}(\Wc_3) &e_{22}(\Wc)-y_3D_{22}(\Wc_3) &*\\
		0&0&0
	\end{pmatrix},$$
    and
$$\Wc_L=(\Wc_1-y_3\partial_1\Wc_3)\Ge_1+(\Wc_2-y_3\partial_2\Wc_3)\Ge_2+\Wc_3\Ge_3.
$$
\end{lemma}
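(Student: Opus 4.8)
The plan is to pass to the limit in the decompositions of Lemmas \ref{DeComKL1} and \ref{DeCom2}, using the $\e$-uniform a priori bounds \eqref{71}--\eqref{72} together with the standard compactness properties of $\Te$ and $\Pi_\e$, and then to identify the limits by testing against smooth oscillating test functions. First I would extract weakly convergent subsequences: the bounds in \eqref{71} give $\tfrac1\e\Wc_{\alpha,\e}$ bounded in $H^1(\o)$, $\Wc_{3,\e}$ and $\Rc_{\alpha,\e}$ bounded in $H^1(\o)$, and $\tfrac1\e\Uc_\e$ bounded in $H^1(\o)^3$, yielding weak limits $\Wc_\alpha$, $\Wc_3$, $R_\alpha$, and $U^\ast$. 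The estimate $\|\partial_\alpha\Wc_{3,\e}+\Rc_{\alpha,\e}\|_{L^2(\o)}\le C\e$ passes to the limit to force $R_\alpha=-\partial_\alpha\Wc_3$; hence $\partial_\alpha\Wc_3\in H^1(\o)$, i.e. $\Wc_3\in H^2(\o)$. The Dirichlet conditions on $\Lambda_{0,\e}$ from Lemmas \ref{lem5}, \ref{DeComKL1} carry over to $\Wc=0$, $R=0$ on $\partial\o$, so $\Wc_3=\partial_\alpha\Wc_3=0$ on $\partial\o$; the initial conditions \eqref{ICL01} give $\Wc(0)=0$. For $\tfrac1\e\Uc_\e$, the estimate $\e\|\partial_\alpha\Uc_\e\|_{L^2(\o)}\le C\e^2$ forces the limit to be spatially constant in a way compatible only with $U^\ast=0$ after using the boundary condition, giving \eqref{CFU1}.

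Next I would handle the two-scale (unfolding) limits \eqref{84}. Since $\tfrac1\e\Wc_{\alpha,\e}\rightharpoonup\Wc_\alpha$ in $H^1(\o)$, the standard unfolding compactness result (Proposition analogous to \cite[Thm 1.36]{CDG}) gives correctors $\wh W_\alpha\in L^2(\o;H^1_{per,0}(Y))$ with $\tfrac1\e\Te(\partial_\alpha\Wc_{\beta,\e})\rightharpoonup\partial_\alpha\Wc_\beta+\partial_{y_\alpha}\wh W_\beta$; the same machinery applied to $\Wc_{3,\e}$ (which converges \emph{strongly} in $H^1(\o)$, hence the corrector is zero and the unfolded gradients converge strongly), to $\Rc_{\beta,\e}$ (giving $\wh R_\beta$), and to $\tfrac1\e\Uc_\e$ (giving $\wh U$, with no macroscopic part since the weak $H^1$-limit is $0$) yields all six lines of \eqref{84}. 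For the rescaled unfolding limits \eqref{CFfu1}, the bounds \eqref{72} give $\tfrac1{\e^2}\Pi_\e(\wo w_\e)$, $\tfrac1{\e^2}\Pi_\e(\wo u_\e)$ bounded in $L^2(\o;H^1(\Yc))^3$ and $\tfrac1\e\Pi_\e(p_\e)$ bounded in $L^2(\o;H^1(\Yc^g))$; extract weak limits $\wo w$, $u$, $p_0$. Using \eqref{UF01} and the rescaling relation $\nabla_y\Pi_\e(\cdot)=\e\Pi_\e(\nabla\cdot)$, the limits lie in the stated periodic spaces; the zero-mean and vanishing-on-$\Gamma\cup\Yc^f$ properties come from $\int_{-\e}^\e\wo w\,dx_3=0$, $\int_{-\e}^\e\wo u\,dx_3=0$ and $\Gu_\e=0$ on $\Gamma_\e\cup\O^f_\e$ passing through $\Pi_\e$; the initial conditions \eqref{ICL01} give $p_0(0)=0$, $\wo{\Gu}(0)=0$.

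Then I would assemble \eqref{CFgu1} and \eqref{CFeu1}. Writing $U_\e=W_E+\wo w_\e$ with $W_E=(\Wc_{\alpha,\e}+x_3\Rc_{\alpha,\e})\Ge_\alpha+\Wc_{3,\e}\Ge_3$ from \eqref{PD1} (and absorbing $\Gu_\e=\Uc_\e+\wo u_\e$), applying $\Pi_\e$, noting $\Pi_\e(x_3\,\Rc_{\alpha,\e})=\e y_3\,\Te(\Rc_{\alpha,\e})$ via the rescaling, and dividing by $\e$: the $\Ge_\alpha$-component converges to $\Wc_\alpha+y_3R_\alpha=\Wc_\alpha-y_3\partial_\alpha\Wc_3$ (weakly in $L^2(\o;H^1(\Yc))$, since $\tfrac1{\e^2}\Pi_\e(\wo w_\e)\to0$ after the extra $\tfrac1\e$), and the $\Ge_3$-component converges strongly to $\Wc_3$; this is \eqref{CFgu1}. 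For the strain, I compute $\tfrac1\e\Pi_\e(e(U_\e))$ directly. Using $\Pi_\e(\partial_j U_{i,\e})=\tfrac1\e\partial_{y_j}\Pi_\e(U_{i,\e})$, the in-plane derivatives of the elementary part produce $e_{\alpha\beta}(\Wc)-y_3D_{\alpha\beta}(\Wc_3)$ plus corrector terms, while the $x_3$-derivatives and the warping/corrector pieces combine into a single $e_y(\wo{\Gu})$ with $\wo{\Gu}\in L^2(\o;H^1_{per,0}(\Yc))^3$ (the periodicity and zero-mean coming from the warping normalizations); the $i3$- and $33$-entries vanish in the limit because $\partial_{y_3}\Pi_\e(\Wc_{3,\e})$ scales like $\e\cdot\Te(\partial_\alpha\Wc_{3,\e})\to0$ and the rotation terms cancel by $R_\alpha=-\partial_\alpha\Wc_3$. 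Taking the trace gives the divergence line with $\Wc_L$ as defined. I would \emph{identify} $\wo{\Gu}$ precisely by testing $\tfrac1\e\Pi_\e(e(U_\e))$ against $e_y(\psi)$ for $\psi\in C^\infty_c(\o;C^\infty_{per}(\Yc))^3$ and matching with the weak limits already obtained for $\wo w$, $u$, $\wh W$, $\wh R$, $\wh U$.

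The main obstacle is the strain-limit identification \eqref{CFeu1}: one must keep careful track of how the three separate correctors from the plate decomposition ($\wh W_\alpha$, $\wh R_\alpha$ from in-plane unfolding of the Kirchhoff--Love part) and the two rescaled warpings ($\wo w$, $u$ from $\Pi_\e$) are glued into the single field $\wo{\Gu}\in L^2(\o;H^1_{per,0}(\Yc))^3$, and verify that the anisotropic scalings ($x_3=\e y_3$ versus the $\e$ in $\nabla_y\Pi_\e=\e\Pi_\e\nabla$) produce exactly the matrix $E(\Wc)$ with vanishing third row; the bookkeeping of which terms survive at order $\e$ and which are lower order is the delicate part, and it is where the Griso decomposition estimates \eqref{PD2} and \eqref{410} are used most sharply.
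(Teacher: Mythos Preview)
Your proposal is correct and follows essentially the same route as the paper: extract weak limits from the a~priori bounds \eqref{71}--\eqref{72}, invoke the standard unfolding compactness results (the paper cites Proposition~1.12, Theorem~1.36, Corollary~1.37 and Proposition~1.39 of \cite{CDG}) to obtain \eqref{84} and \eqref{CFfu1}, and then assemble \eqref{CFgu1}--\eqref{CFeu1} from the decompositions of Lemmas~\ref{DeComKL1}--\ref{DeCom2}. The one substantive difference is the identification of $\wo{\Gu}$: rather than testing against oscillating functions, the paper simply \emph{defines}
\[
\wo{\Gu}=\wh{W}+y_3\wh{R}+\wo{w}+\wh{U}+u
\]
as the sum of the five correctors already obtained, and reads off \eqref{CFeu1} directly; this is shorter and makes the periodicity and zero-mean properties of $\wo{\Gu}$ transparent. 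Two small clean-ups: $(1)$ the limit $\tfrac1\e\Uc_\e\rightharpoonup 0$ in $H^1(\o)^3$ follows immediately from $\|\Uc_\e\|_{L^2(\o)}\le C\e^2$ (strong $L^2$ convergence to zero) together with the $H^1$ bound, not from the gradient estimate forcing a constant; $(2)$ the $\alpha 3$-entries of $\tfrac1\e e(W_{E,\e})$ do \emph{not} vanish --- $\tfrac1{2\e}(\Rc_{\alpha,\e}+\partial_\alpha\Wc_{3,\e})$ is merely bounded by \eqref{71}$_5$ --- but their limit is a function of $(x',y')$ only and is absorbed into $e_y(\wo{\Gu})$, which is exactly why the explicit sum above is the cleaner bookkeeping device.
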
 
\begin{proof}
{\bf Step 1.} The convergences \eqref{CFU1}, are the immediate consequences of the estimates \eqref{71}$_{2,3,4,5}$. The boundary and initial conditions satisfied by $\Wc$ come from the boundary and initial conditions satisfied by $\Wc_\e$ and $U_\e$.\\  The limit for the unfolded fields, i.e the convergences \eqref{84} is due to \eqref{CFU1} and using the properties of the unfolding operator given in Proposition 1.12, Theorem 1.36, Corollary 1.37 and Proposition 1.39 of \cite{CDG}.\\
The convergences \eqref{CFfu1} are the consequence of the estimates \eqref{72} and the properties of the re-scaling unfolding operator. Since we have
$\int_{-\e}^\e\wo{w}_\e\,dx_3=0$, which imply $\int_{-1}^1\wo{w}\,dy_3=0$ a.e. in $Y$. Similarly, since $\wo{u}_\e=0$ on $\Gamma_\e\cup{\O^f_\e}$ and $\int_{-\e}^\e \wo{u}_\e\,dx_3=0$, we get $u=0$ a.e. in $\Gamma\cup {\Yc^f}$ and $\int_{-1}^1\wo{u}\,dy_3=0$ a.e. in $Y$. Observe that $\wo{w}$ satisfy the following initial condition 
\begin{equation}\label{ICL02}
    \wo{w}(0)=u(0)=0,\quad\text{in $\o\X \Yc$}.
\end{equation}
{\bf Step 2.}
From Lemma \ref{DeComKL1} and \ref{DeCom2}, we have
\begin{equation}\label{EE01}
	\begin{aligned}
		U_{\alpha,\e}(\cdot,x_3)&=\Wc_{\alpha,\e}+x_3\Rc_{\alpha,\e}+\wo{w}_{\alpha,\e}(\cdot,x_3)+\Uc_{\alpha,\e}+\wo{u}_{\alpha,\e}(\cdot,x_3),\\
		U_{3,\e}(\cdot,x_3)&=\Wc_{3,\e}+\wo{w}_{3,\e}(\cdot,x_3)+\Uc_{3,\e}+\wo{u}_{3,\e}(\cdot,x_3),
	\end{aligned}\qquad \text{a.e. in}\quad (x',x_3)\in \O_\e.
\end{equation}
From the convergences \eqref{CFU1}, \eqref{CFfu1} with the estimates \eqref{71}, and using the re-scaling unfolding operator, we get using the fact that the terms with $\fu_\e$ vanish
$$\begin{aligned}
	{1\over \e}\Pi_\e(U_{\alpha,\e}) & \rightharpoonup \Wc_{\alpha}-y_3{\partial\Wc_{3}\over\partial x_\alpha}\quad \text{weakly in}\quad L^2(\o;H^1(\Yc)),\\
	\Pi_\e(U_{3,\e})&\to \Wc_3\quad\text{strongly in}\quad L^2(\o;H^1(\Yc)).
\end{aligned}$$
These give the convergences \eqref{CFgu1}.\\[1mm]
{\bf Step 3.} 
We have that
$$e(U_\e)=
\begin{aligned}
	&e(\Gw_\e)+e(\Gu_\e),\quad &&\text{a.e. in}\quad L^2(\O_\e)^{3\X3}.
\end{aligned}
$$
The strain tensor of $\Gw_\e$ is given by the following $3\X3$ symmetric matrix defined a.e. in $\O_\e$ by 
$$
\begin{aligned}
	e(\Gw_\e)&=e(W_{E,\e})+e(\wo{w}_\e)\\
	&=\begin{pmatrix}
		\ds e_{11}(\Wc_{\e})+x_3{\partial\Rc_{1,\e}\over\partial x_1}&*&*\\
		\ds e_{12}(\Wc_{\e})+x_3\left({\partial\Rc_{1,\e}\over\partial x_2}+{\partial\Rc_{2,\e}\over\partial x_1}\right)& \ds e_{22}(\Wc_{\e})+x_3{\partial\Rc_{2,\e}\over\partial x_2}&*\\
		{1\over 2}\Rc_{1,\e}&{1\over 2}\Rc_{2,\e}&0
	\end{pmatrix}+e(\wo{w}_\e).
\end{aligned}
$$
From the convergences \eqref{CFU1}, \eqref{CFfu1}$_{1,2}$ with definition of $e(\Gw_\e)$ we get
$$\begin{aligned}
	{1\over\e}\Pi_\e(e(U_\e))&={1\over\e}\Pi_\e\left(e(W_{E,\e})\right)+{1\over\e^2}e_y\left(\Pi_\e(\wo{w}_{\e})\right)+{1\over \e}\Pi_\e\big(e(\Uc_\e)\big)+{1\over \e^2}e_y\left(\Pi_\e(\wo{u}_\e)\right)\\
	&\rightharpoonup E(\Wc)+e_{y}(\wh{W}+y_3\wh R)+e_y(\wo{w})+e_y(\wh U)+e_y(u)\quad \text{weakly in}\quad L^2(\o\X \Yc)^{3\X3},
\end{aligned}$$
with $\wh W=\wh W_\alpha\Ge_\alpha$ and $\wh R=\wh R_\alpha\Ge_\alpha$.\\
Similarly, we get using the decomposition \eqref{EE01} and the convergence \eqref{84}--\eqref{CFfu1}$_{1,3}$ that
$${1\over \e}\Pi_\e\left(\nabla \cdot U_\e\right) \rightharpoonup \nabla \cdot \Wc_L+\nabla_y\cdot(\wh W+y_3\wh R) +\nabla_y \cdot \wo{w}+\nabla_y\cdot \wh U+\nabla_y\cdot u,\quad\text{weakly in $L^2(\o\X \Yc)$},$$
where
$$\Wc_L=(\Wc_1-y_3\partial_1\Wc_3)\Ge_1+(\Wc_2-y_3\partial_2\Wc_3)\Ge_2+\Wc_3\Ge_3.
$$
{\bf Step 4.}
Finally, we set the limit warping as follows
$$\wo{\Gu}=\wh{W}+y_3\wh R+\wo{w}+\wh U+u.$$
Hence, we have the convergences \eqref{CFeu1}. Observe that $\wo{\Gu}\in L^2(\o; H^1_{per,0}(\Yc))^3$ and $\wo{\Gu}(0)=0$ in $\o\X \Yc$ due to the properties satisfied by $\wh W$, $\wh R$, $\wo{w}$, $\wh U$ and $u$. This completes the proof.
\end{proof}
\section{Unfolded limit and homogenized problems}\label{Sec08}
In this section, we describe the limit problem and prove that it has a unique solution. For that, we define the limit displacement and pressure space by
$$\GU=\D\X H^1((0,T);L^2(\o;H^1_{per}(\Yc^g)),$$
where 
$$\D=\D_0\X H^1\big((0,T);L^2(\o; H_{per,0}^1(\Yc))^3\big),$$
with 
$$\begin{aligned}
\D_0&=\{\Wc=(\Wc_m,\Wc_3)\in H^1\big((0,T);H^1(\o)^2\X H^2(\o)\big)\;|\;\Wc=\partial_\alpha\Wc_3=0,\;\text{a.e. on $\partial\o$}\}.
\end{aligned}$$
Also, the limit fields $(\Wc,\wo{\Gu},p_0)\in \GU$ satisfy the following initial conditions
\begin{equation}\label{LIC01}
    \Wc(0)=\wo{\Gu}(0)=0,\quad\text{in $\o\X \Yc$}\quad\text{and}\quad p_0(0)=0,\quad\text{in $\o\X \Yc^g$}.
\end{equation}
The following is the main result of this paper:
\begin{theorem}\label{Th03}
    Let $\{(U_\e,p_\e)\}_\e$ be the sequence of unique solutions of the coupled system \eqref{MainPro}. Then, for the whole sequence $\{\e\}_\e$, we have for $t\in(0,T)$
    \begin{itemize}
        \item Convergence of displacement and pressure:
        \begin{equation}\label{IL02}
            \begin{aligned}
                	{1\over \e}\Pi_\e\left(U_{\alpha,\e}\right)&\rightharpoonup \Wc_\alpha-y_3{\partial \Wc_3\over \partial x_\alpha}\quad &&\text{weakly in}\quad L^2(\o;H^1(\Yc)),\\
			\Pi_\e(U_{3,\e})&\to \Wc_3\quad &&\text{strongly in}\quad L^2(\o;H^1(\Yc)),\\
                 {1\over \e}\Pi_\e(p_\e) &\rightharpoonup p_0,\quad&&\text{weakly in $L^2(\o; H^1(\Yc^g))$}.
            \end{aligned}
        \end{equation}
        \item Convergence of symmetric gradient and divergence of displacement: 
        \begin{equation}\label{IL01}
        \begin{aligned}
        {1\over \e}\Pi_\e\big(e(U_\e)\big)& \rightharpoonup E(\Wc)+e_y(\wo{\Gu}),\quad &&\text{weakly in $L^2(\o\X \Yc)^{3\X3}$},\\
        {1\over \e}\Pi_\e\left(\nabla\cdot U_\e\right)&\rightharpoonup \nabla\cdot \Wc_L+\nabla_y\cdot\wo{\Gu} \quad &&\text{weakly in}\quad L^2(\o\X \Yc).
    \end{aligned}
    \end{equation}
    \end{itemize}
     The limit field $(\Wc,\wo{\Gu},p_0)\in \D$ with  is the unique solution of the re-scaled unfolded problem:
    \begin{equation}\label{MUP}
        \begin{aligned}
         {1\over |\Yc|}\int_{\o\X \Yc}\GA\left(E(\Wc)+e_y(\wo{\Gu})\right):\left(E(\Vc)+e_y(\wo{\Gv})\right)\,dx'dy&-{1\over |\Yc|}\int_{\o\X \Yc^g}\alpha p_0\left(\nabla \cdot \Vc_L+\nabla_y\cdot\wo{\Gv}\right)\,dx'dy\\
         &\hskip 35mm =\int_{\o} f\cdot \Vc\,dx',\\
        \int_{\o\X\Yc^g}\partial_t\big(p_0+\alpha(\nabla\cdot\Wc_L+\nabla_y\cdot\wo{\Gu})\big)\phi\,dx'dy&+\int_{\o\X\Yc^g}K\nabla_yp_0\cdot\nabla_y\phi\,dx'dy\\
        &\hskip 35mm =\int_{\o\X\Yc^g}h\phi\,dx'dy	,
    \end{aligned}
    \end{equation}
    for $(\Vc,\wo{\Gv},\phi)\in \D$.\\[1mm]
    Moreover, the limit field $(\Wc,p_0)\in \D_0\X H^1((0,T); L^2(\o\; H^1_{per}(\Yc^g))$ is the unique solution of the homogenized problem: $\forall\,(\Vc,\phi)\in \D_0\X L^2(\o; H^1_{per}(\Yc^g))$
   	\begin{equation}\label{Hom01}
   		\left.\begin{aligned}
   			&\int_{\o} \GA^{hom}E(\Wc):E(\Vc)\,dx'
   			-\int_{\o}\alpha p_m\nabla\cdot \Vc_L\,dx'\\
   			&\hskip 12mm=\int_\o f\cdot \Vc\,dx'-\int_{\o\X \Yc}\GA e_y(\Gu_p):E(\Vc)\,dx'dy,\\
   			&\int_{\o\X\Yc^g}\partial_t\big(p_0+\alpha\nabla\cdot\Wc_L\big)\phi\,dx'dy
   			+\int_{\o\X\Yc^g}K\nabla_y p_0\cdot\nabla_y\phi\,dx'dy\\
   			&\hskip 12mm=\int_{\o\X \Yc^g}h\phi\,dx'dy-\alpha\int_{\o\X \Yc^g}\big(\nabla_y\cdot\partial_t\Gu_d-\nabla_y\cdot\partial_t\Gu_p)\,dx'dy
   		\end{aligned}\right\},
   	\end{equation}
   	where the homogenized tensor $\GA^{hom}$ is given by
       \begin{multline}\label{HomC02}    
    \GA^{hom}E(\Wc):E(\Vc)
    =\Big[a^{hom}_{\alpha\beta\alpha'\beta'}e_{\alpha\beta}(\Wc)e_{\alpha'\beta'}(\Vc)+b^{hom}_{\alpha\beta\alpha'\beta'}\left(e_{\alpha\beta}(\Wc)\partial_{\alpha'\beta'}\Vc_3+\partial_{\alpha\beta}\Wc_3e_{\alpha'\beta'}(\Vc)\right)\\
    +c^{hom}_{\alpha\beta\alpha'\beta'}\partial_{\alpha\beta}\Wc_3\partial_{\alpha'\beta'}\Vc_3\Big],
    \end{multline}
    with the homogenized coefficients given by    
    \eqref{HomC01} and $\Gu_d$ is given by \eqref{MD01} and $\Gu_p$ is the unique solution of the cell problem
   	$$ \int_{\Yc}\GA(y) e_y(\Gu_p) : e_y(\wo{\Gv})\,dy={1\over |\Yc|}\int_{\Yc^g}\alpha p_0\nabla_y\cdot \wo{\Gv}\,dy,\quad \text{for all $\wo{\Gv}\in H^1_{per}(\Yc)^3$},
   	$$
   	with
   	$$p_m(t,x')={1\over |\Yc|}\int_{\Yc^g}p_0(t,x',y)\,dy,\quad \Vc_L(t,x')=\sum_{\alpha=1}^2(\Vc_\alpha(t,x')-y_3\partial_\alpha\Vc_3(t,x'))\Ge_\alpha+\Vc_3(t,x')\Ge_3,$$
   	for $x'\in\o$ and $t\in (0,T)$.
\end{theorem}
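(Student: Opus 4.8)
The plan is to pass to the limit in the weak formulation \eqref{MainPro} along the subsequence supplied by Lemma~\ref{Lem08}, identify the limit as the solution of \eqref{MUP}, prove that \eqref{MUP} has at most one solution (which promotes all the convergences to the whole sequence and supplies the claimed $H^1$-in-time regularity), and finally eliminate the cell variable $\wo{\Gu}$ by solving cell problems, thereby producing $\GA^{hom}$ and \eqref{Hom01}. For the passage to the limit I would fix a smooth test triple -- $\Vc=(\Vc_m,\Vc_3)$ with the boundary conditions of $\D_0$, a smooth $\wo{\Gv}$ that is $Y$-periodic in $y'$, and $\phi$ smooth and $Y$-periodic in $y'$ -- and insert into \eqref{MainPro} the oscillating test pair
\begin{equation*}
v_\e(x)=\big(\e\Vc_1(x')-x_3\partial_1\Vc_3(x')\big)\Ge_1+\big(\e\Vc_2(x')-x_3\partial_2\Vc_3(x')\big)\Ge_2+\Vc_3(x')\Ge_3+\e^2\,\wo{\Gv}\Big(x',\Big\{\tfrac{x'}{\e}\Big\},\tfrac{x_3}{\e}\Big),\qquad \phi_\e(x)=\phi\Big(x',\Big\{\tfrac{x'}{\e}\Big\},\tfrac{x_3}{\e}\Big).
\end{equation*}
Then $v_\e\in H^1_0(\O_\e)^3$ satisfies \eqref{28}, and by \eqref{UF01} one checks that $\tfrac1\e\Pi_\e(e(v_\e))\to E(\Vc)+e_y(\wo{\Gv})$ and $\tfrac1\e\Pi_\e(\nabla\cdot v_\e)\to\nabla\cdot\Vc_L+\nabla_y\cdot\wo{\Gv}$ strongly in $L^2(\o\X\Yc)$, while $\Pi_\e(\phi_\e)\to\phi$ and $\nabla_y\Pi_\e(\phi_\e)\to\nabla_y\phi$ strongly in $L^2(\o\X\Yc^g)$. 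Since the periodic data obey $\Pi_\e(\GA_\e)(x',y)=\GA(y)$ and $\Pi_\e(\on^g_\e)=\on_{\Yc^g}$ identically, I would rewrite each integral of \eqref{MainPro} over $\O_\e$ (resp.\ $\O_\e^g$) as $\e$ times the corresponding integral over $\o\X\Yc$ (resp.\ $\o\X\Yc^g$), divide the momentum balance by $|\Yc|\e^3$ and the mass balance by $\e^2$, and pass to the limit: the pairing of a weakly convergent unfolded solution ($\tfrac1\e\Pi_\e(e(U_\e))$, $\tfrac1\e\Pi_\e(\nabla\cdot U_\e)$, $\tfrac1\e\Pi_\e(p_\e)$, $\Pi_\e(\nabla p_\e)$, all from Lemma~\ref{Lem08}) with the strongly convergent unfolded test function passes to the limit; the loading term is treated via \eqref{AF02} and the splitting $U_\e=\Gw_\e+\Gu_\e$ of Lemma~\ref{lem5}, as in Step~2 of Lemma~\ref{Lem06}; and the term $\partial_t(cp_\e+\alpha\nabla\cdot U_\e)$, for which no uniform $H^1$-in-time bound is available, I would first test against $\phi_\e\,\zeta(t)$ with $\zeta\in C^\infty_c(0,T)$, integrate by parts in time, and pass to the limit using only the $L^\infty$-in-time bounds of \eqref{Est01}. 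A density argument then removes the smoothness of $(\Vc,\wo{\Gv},\phi)$ and yields \eqref{MUP}.

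\textbf{Uniqueness, whole-sequence convergence and time regularity.} If $(\Wc,\wo{\Gu},p_0)$ and $(\Wc',\wo{\Gu}',p_0')$ both solve \eqref{MUP} with the same data, their difference solves the homogeneous system with zero initial data. I would test the first equation with $(\partial_t\delta\Wc,\partial_t\delta\wo{\Gu})$ and the second with $\delta p$, add (after multiplying the first by $|\Yc|$), and integrate over $(0,t)$; the coupling terms cancel and, using the zero initial data,
\begin{multline*}
\frac{|\Yc|}{2}\int_{\o\X\Yc}\GA\big(E(\delta\Wc)+e_y(\delta\wo{\Gu})\big):\big(E(\delta\Wc)+e_y(\delta\wo{\Gu})\big)\,dx'dy\\
+\frac{c}{2}\,\|\delta p(t)\|_{L^2(\o\X\Yc^g)}^2+\int_0^t\!\!\int_{\o\X\Yc^g}K\nabla_y\delta p\cdot\nabla_y\delta p\,dx'dy\,ds=0.
\end{multline*}
Coercivity \eqref{CoeCon} of $\GA$, positive-definiteness of $K$, and a Korn-type inequality on the product cell then force $E(\delta\Wc)+e_y(\delta\wo{\Gu})=0$ and $\delta p=0$: splitting $E(\delta\Wc)+e_y(\delta\wo{\Gu})$ into its $y_3$-constant and $y_3$-affine parts recovers $e(\delta\Wc)$, the Hessian $(\partial_{\alpha\beta}\delta\Wc_3)$ and $e_y(\delta\wo{\Gu})$; the conditions on $\partial\o$ built into $\D_0$ let Korn's and Poincaré's inequalities kill $\delta\Wc_m$ and $\delta\Wc_3$, while $e_y(\delta\wo{\Gu})=0$ together with $Y$-periodicity in $y'$ and the zero-mean normalization of $H^1_{per,0}(\Yc)$ kills $\delta\wo{\Gu}$. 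Hence \eqref{MUP} has a unique solution, so every subsequence of $\{(U_\e,p_\e)\}_\e$ has a further subsequence for which the convergences of Lemma~\ref{Lem08} hold with the \emph{same} limit; thus \eqref{IL02} and \eqref{IL01} hold along the whole sequence. Running the same energy identity with the genuine data (and $f\in H^1((0,T);L^2(\o)^3)$, $h\in L^2$) gives uniform-in-time control of $\partial_t$ of the limit fields, i.e.\ $(\Wc,\wo{\Gu},p_0)\in\D\X H^1((0,T);L^2(\o;H^1_{per}(\Yc^g)))$, which legitimises the pointwise-in-time form of \eqref{MUP}.

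\textbf{Decoupling.} Taking $\Vc=0$ in the first equation of \eqref{MUP} shows that for a.e.\ $(t,x')$ the function $\wo{\Gu}(t,x',\cdot)$ solves
\begin{equation*}
\int_{\Yc}\GA(y)\,e_y(\wo{\Gu}):e_y(\wo{\Gv})\,dy=-\int_{\Yc}\GA(y)\,E(\Wc):e_y(\wo{\Gv})\,dy+\frac1{|\Yc|}\int_{\Yc^g}\alpha p_0\,\nabla_y\cdot\wo{\Gv}\,dy,\qquad \wo{\Gv}\in H^1_{per,0}(\Yc)^3,
\end{equation*}
whose right-hand side is linear in $e_{\alpha\beta}(\Wc)$, $\partial_{\alpha\beta}\Wc_3$ and in $p_0$. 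By linearity and Lax--Milgram (again via the product-cell Korn inequality) I would write $\wo{\Gu}$ as the sum of a part depending linearly on $E(\Wc)$, assembled from the elementary correctors associated to the six ``strain/curvature'' data $e_{\alpha\beta}$, $\partial_{\alpha\beta}$ and yielding $\Gu_d$ as in \eqref{MD01}, plus the part $\Gu_p$ depending linearly on $p_0$ and solving the stated pressure cell problem. Substituting this representation into the first equation of \eqref{MUP} and then choosing $\wo{\Gv}=0$ with $\Vc$ arbitrary, the $e_y$-contributions assemble into $\GA^{hom}$ with blocks $a^{hom},b^{hom},c^{hom}$ as in \eqref{HomC02}--\eqref{HomC01} and the direct pressure term collapses to the $\Yc^g$-average $p_m$, giving the first line of \eqref{Hom01}; substituting into the second equation of \eqref{MUP} and writing $\nabla_y\cdot\wo{\Gu}=\nabla_y\cdot\Gu_d+\nabla_y\cdot\Gu_p$ gives the second line. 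Uniqueness for \eqref{Hom01} follows from that of \eqref{MUP}, the two formulations being equivalent.

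\textbf{Main obstacle.} I expect the crux to be the Korn-type coercivity on $\o\X\Yc$ for the combined strain $E(\Wc)+e_y(\wo{\Gu})$: it underlies the unique solvability of the limit problem, hence the upgrade of Lemma~\ref{Lem08} to the whole sequence and the a posteriori time regularity, and its subtlety is that $E(\Wc)$ does not control the full gradient of $\Wc$, so one must exploit its precise block structure and separate the parts constant and affine in $y_3$ to recover both $e(\Wc)$ and the Hessian of $\Wc_3$. A secondary point is the treatment of $\partial_t(cp_\e+\alpha\nabla\cdot U_\e)$ in the passage to the limit, which has to be carried out through a time-integrated formulation because Section~\ref{Sec06} provides no uniform $H^1$-in-time a priori bound on $(U_\e,p_\e)$.
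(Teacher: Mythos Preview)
Your proposal is correct and follows essentially the same route as the paper: the same oscillating test functions $v_\e,\phi_\e$, the same energy argument for uniqueness (testing with $(\partial_t\Wc,\partial_t\wo{\Gu},p_0)$), and the same elimination of $\wo{\Gu}$ via the correctors $\chi^{\alpha\beta}_m,\chi^{\alpha\beta}_b$ and the pressure corrector $\Gu_p$. Two small remarks: the paper does not prove the product-cell Korn inequality by the $y_3$-splitting you sketch (that argument does not quite work because $e_y(\wo{\Gu})$ is not polynomial in $y_3$), but instead invokes \eqref{UK01} from the appendix, itself reduced to the norm-equivalence Lemma~\ref{NEq1} proved by contradiction; and your integration-by-parts-in-time treatment of $\partial_t(cp_\e+\alpha\nabla\cdot U_\e)$ is actually more careful than the paper's Step~3, which passes to the limit directly.
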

\begin{proof}
The proof is presented in the subsequent steps.\\[1mm]
  {\bf Step 1.} We prove \eqref{IL02}--\eqref{IL01}.\\[1mm]
  Let $\{(U_\e,p_\e)\}_\e$ be the sequence of unique weak solutions of the problem \eqref{MainPro} in $\GU_\e$. Then, using Lemma \ref{Lem06} and the assumption on the force \eqref{AF01}, we have the estimates \eqref{Est01} for the fields $U_\e$ and $p_\e$. For a fix $\e>0$, we extend the field ${U_\e}_{|\O^f_\e}$ using Lemma \ref{lem5} to get $\Gw_\e$ and $\Gu_\e$ as in \eqref{ExDis}. Finally, we decompose $\Gw_\e$ and $\Gu_\e$ using Griso's decomposition technique in Lemma \ref{DeComKL1} and \ref{DeCom2}, respectively. So, as in Section \ref{Sec08}, we have estimates \eqref{71}--\eqref{72}. Finally, using the Lemma \ref{Lem08}, we have (at least for a subsequence) the convergences \eqref{IL02}--\eqref{IL01} with $(\Wc,\wo{\Gu},p_0)\in \GU$.\\[1mm] 
  {\bf Step 2.} We prove \eqref{MUP}$_1$.\\[1mm]
   Let us define the sequence of test functions.
    Let $v=(\Vc,\wo{\Gv})\in \D\cap \C^1((0,T);\C^1(\wo{\o})^2\X \C^2(\wo{\o})\X \C^1(\wo{\o}\X \wo{\Yc})^3)$. Then, we set
\begin{equation}\label{Test01}
\begin{aligned}
    	v_\e(t,x)&=\left[\e\Vc_\alpha(t,x')-x_3{\partial\Vc_3\over \partial x_\alpha}(t,x')\right]\Ge_\alpha+\Vc_3(t,x')\Ge_3+\e^2\wo{\Gv}\left(t,x',\left\{{x'\over \e}\right\},{x_3\over \e}\right),
\end{aligned}
\end{equation}
Then, similarly proceeding as in the proof of Lemma \ref{Lem08}, we obtain using the re-scaling unfolding operator, for all $t\in (0,T)$ 
\begin{equation}
\begin{aligned}
        {1\over \e}\Pi_\e\big(e(v_\e) &\to E(\Vc)+e_y(\wo{\Gv}),\quad &&\text{strongly in $L^2(\o\X \Yc)^{3\X3}$},\\
        {1\over \e}\Pi_\e\big(\nabla \cdot v_\e\big)& \to \nabla\cdot \Vc_L+\nabla_y\cdot\wo{\Gv},\quad&&\text{strongly in $L^2(\o\X \Yc)$}.
\end{aligned}
\end{equation}
with $\Vc_L=\left[\Vc_\alpha-y_3\partial_\alpha\Vc_3\right]\Ge_\alpha+\Vc_3\Ge_3$. We also have for the right-hand side
$$\lim_{\e\to0}{1\over \e^3}\int_{\O_\e}f_\e\cdot v_\e\,dx=\lim_{\e\to0}{1\over \e^2}\int_{\o\X \Yc}\Pi_\e(f_\e\cdot v_\e)\,dx'dy=|\Yc|\int_\o f\cdot \Vc\,dx'.$$
Taking $v_\e$ as a test function in \eqref{MainPro}$_1$, unfolding the equation with $\Pi_\e$ and dividing by $\e^3$ give
$$ \int_{\o\X \Yc}\GA{1\over \e}\Pi_\e\big(e(U_\e)\big):{1\over \e}\Pi_\e\big(e(v_\e)\big)\,dx'dy-\int_{\o\X \Yc^g}\alpha{1\over \e}\Pi_\e(p_\e){1\over \e}\Pi_\e\big(\nabla \cdot v\big)\,dx'dy={1\over \e^2}\int_{\o\X \Yc}\Pi_\e(f_\e\cdot v_\e)\,dx'dy,$$
then passing to the limit give \eqref{MUP}$_1$
for $(\Vc,\wo{\Gv})\in \D\cap \C^1((0,T);\C^1(\wo{\o})^2\X \C^2(\wo{\o})\X \C^1(\wo{\o}\X \wo{\Yc})^3)$.
Since $\D\cap \C^1((0,T);\C^1(\wo{\o})^2\X \C^2(\wo{\o})\X \C^1(\wo{\o}\X \wo{\Yc})^3)$ is dense in $\D$, we get \eqref{MUP}$_1$ for all $(\Vc,\wo{\Gv})\in\D$.\\[1mm]
{\bf Step 3.} We prove \eqref{MUP}$_2$.\\[1mm]
Let $\phi\in H^1((0,L); L^2(\o; H^1_{per}(\Yc^g))\cap \C^1((0,T);\C^1(\wo{\o}\X \wo{\Yc^g})$. Then, we set
$$\begin{aligned}
    \phi_{\e}(t,x)=\e\phi\left(t,x',\left\{{x'\over \e}\right\},{x_3\over \e}\right),
\end{aligned}\quad\text{a.e. in $\O^g_\e$}.$$
Observe that we have for all $t\in (0,T)$
{\begin{equation}\label{99}
    \begin{aligned}
    {1\over \e}\Pi_\e(\phi_\e) & \to \phi,\quad &&\text{strongly in $L^2(\o\X \Yc^g)$},\\
    \Pi_\e\big(\nabla \phi_\e\big)&\to \nabla_y\phi,\quad&&\text{strongly in $L^2(\o\X \Yc^g)$},
\end{aligned}
\end{equation}}
and for the right-hand side, we have
$$\lim_{\e \to0}{1\over \e^3}\int_{\O^g_\e}h_\e\phi_\e\,dx=\lim_{\e\to0}\int_{\o\X \Yc^g}\Pi_\e(h){1\over \e}\Pi_\e(\phi_\e)\,dx'dy=\int_{\o\X\Yc^g}h\phi\,dx'dy.$$
 Taking $\phi_\e$ as a sequence of test functions in \eqref{MainPro}$_2$, unfolding the equation with $\Pi_\e$ and dividing by $\e^3$ give
 \begin{multline*}
     \int_{\o\X \Yc^g}\partial_t\big(c{1\over \e}\Pi_\e(p_\e)+\alpha{1\over \e}\Pi_\e\big(\nabla\cdot U_\e\big)\big){1\over \e}\Pi_\e(\phi_\e)\,dx'dy+K\int_{\o\X \Yc^g}\Pi_\e (\nabla p_\e)\cdot \Pi_\e\big(\nabla\phi_\e\big)\,dx'dy\\
     =\int_{\o\X \Yc^g}\Pi_\e(h){1\over \e}\Pi_\e(\phi_\e)\,dx'dy.
 \end{multline*}
So,  by passing to the limit in the above expression along with the convergences \eqref{IL02}$_3$, \eqref{IL01}$_2$ give \eqref{MUP}$_2$ for $\phi\in H^1((0,T); L^2(\o; H^1_{per}(\Yc^g))\cap \C^1((0,T);\C^1(\wo{\o}\X \wo{\Yc^g})$. Using density argument, we have \eqref{MUP}$_2$ for $\phi\in H^1((0,T);L^2(\o; H^1_{per}(\Yc^g))$. \\[1mm]
{\bf Step 4.} In this step, we prove that there exist a unique solution to the unfolded limit problem \eqref{MUP}.\\
For that, it is enough to show that $f=0$ and $h=0$ give $(\Wc,\wo{\Gu},p_0)=0$ for all $(\Vc,\wo{\Gv},\phi)\in \D$.\\
We test the limit unfolded problem with $f=0$ and $h=0$ by $\Vc=\partial_t\Wc$, $\wo{\Gv}=\partial_t\wo{\Gu}$ and $\phi=p_0$. Then, integrating with respect to time and adding the two equations give
\begin{multline*}
        \int_0^t\int_{\o\X\Yc}\GA\left(E(\Wc)+e_y(\wo{\Gu})\right):\left(E(\partial_t\Wc)+e_y(\partial_t\wo{\Gu})\right)\,dx'dyds\\
        -\int_0^t\int_{\o\X \Yc^g}\alpha p_0\left(\nabla \cdot \partial_t\Wc_L+\nabla_y\cdot\partial_t\wo{\Gu}\right)\,dx'dyds
         +\int_0^t\int_{\o\X\Yc^g}\partial_t\big(p_0+\alpha(\nabla\cdot\Wc_L+\nabla_y\cdot\wo{\Gu})\big)p_0\,dx'dyds\\
         +\int_{0}^t\int_{\o\X\Yc^g}K\nabla_yp_0\cdot\nabla_y p_0\,dx'dyds=0,
\end{multline*}
which imply
\begin{multline*}
    \int_0^t\int_{\o\X \Yc^g}\partial_t p_0p_0\,dx'dyds+\int_0^t\int_{\o\X\Yc}\GA\left(E(\Wc)+e_y(\wo{\Gu})\right):\left(E(\partial_t\Wc)+e_y(\partial_t\wo{\Gu})\right)\,dx'dyds\\
    +\int_0^t K\|\nabla_yp_0\|^2_{L^2(\o\X \Yc^g)}\,ds=0.
\end{multline*}
Proceeding as in Step 1 of Lemma \ref{Lem06} along with the coercivity of $\GA$ and the fact that $K$ is a positive definite, we get
\begin{multline*}
c\|p_0(t)\|^2_{L^2(\o\X \Yc^g)}+c_0\left\|\big(E(\Wc)+e_y(\wo{\Gu})\big)(t)\right\|^2_{L^2(\o\X \Yc}+2c_K\int_0^t\|\nabla_y p_0(s)\|^2_{L^2(\o\X \Yc^g)}\,ds\\
\leq c\|p_0(0)\|^2_{L^2(\o\X \Yc^g)}+ c_0\left\|\big(E(\Wc)+e_y(\wo{\Gu})\big)(0)\right\|^2_{L^2(\o\X \Yc)}.
\end{multline*}
Using the initial conditions \eqref{LIC01} and the Korn-type inequality \eqref{UK01}, we have for $t\in (0,T)$
$$\|p_0(t)\|^2_{L^2(\o\X\Yc^g))}+\sum_{\alpha=1}^2\|\Wc_\alpha(t)\|^2_{H^1(\o)}+\|\Wc_3(t)\|^2_{H^2(\o)}+\|\wo{\Gu}(t)\|^2_{L^2(\o; H^1(\Yc))}+\|\nabla_y p_0\|^2_{L^2((0,T)\X \o\X \Yc^g)}=0,$$
which give $(\Wc,\wo{\Gu},p_0)=(0,0,0)$.\\[1mm]
{\bf Step 5.} Using the uniqueness of the solution $(\Wc,\wo{\Gu},p_0)\in\GU$, we have that all the above convergences \eqref{IL02}--\eqref{IL01} hold for the whole sequence.\\[1mm]
{\bf Step 6.}
Now, we derive the homogenized problem for that we express the microscopic displacement $\wo{\Gu}$ in terms of $\Wc$, $p_0$ and some correctors using cell problems.\\
Let us chose $\Vc=0$ and localizing the Problem \eqref{MUP}$_1$, we obtain
\begin{equation}\label{ML01}
    \int_{\Yc}\GA(y)\big(E(\Wc)+e_y(\wo{\Gu})\big):e_y(\wo{\Gu})\,dy-\int_{\Yc^g}\alpha p_0\nabla_y\cdot\wo{\Gv}\,dy=0,\quad\forall\; \wo{\Gv}\in H^1_{per,0}(\Yc)^3.
\end{equation}
So, we have a linear problem and using Korn's inequality we have a unique solution. We also have that the microscopic displacement $\wo{\Gu}$ can be written in terms of the tensor $E(\Wc)$, $p_0$ and some correctors.\\
We define the following $3\X 3$ symmetric matrices by
	$$\GM^{ij}_{kl}={1\over 2}\left(\d_{ki}\d_{lj}+\d_{kj}\d_{li}\right),\quad i,j,k,l\in\{1,2,3\},$$
	where $\d_{ij}$ is the Kronecker symbol.\\
    So, we have the following expression for $\wo{\Gu}$
    \begin{equation}\label{MD01}
    \begin{aligned}
                \wo{\Gu}(t,x',y)&=\Gu_d(t,x',y)-\Gu_p(t,x',y)\\
                &=\sum_{\alpha,\beta=1}^2\left(e_{\alpha\beta}(\Wc)(t,x')\chi^{\alpha\beta}_m(y)+\partial_{\alpha\beta}\Wc_3(t,x')\chi^{\alpha\beta}_b(y)\right)-\Gu_p(t,x',y)
    \end{aligned},\quad\text{for a.e. $y\in\Yc$},
    \end{equation}
    where correctors $\chi^{\alpha\beta}_m$ and $\chi^{\alpha\beta}_b$  are the unique solutions of the cell problems
    \begin{equation*}
    \begin{aligned}
         &\text{Find $(\chi^{\alpha\beta}_m,\chi^{\alpha\beta}_b)\in [H^1_{per,0}(\Yc)^3]^6$ and }\\
         &\hskip 5mm \left.\begin{aligned}
             \int_{\Yc}\GA_{ijkl}(y)\big(\GM^{\alpha\beta}_{ij}+e_{y,ij}(\chi^{\alpha\beta}_m)\big) e_{y,kl}(\wo{\Gv})\,dy=0,\\
             \int_{\Yc}\GA_{ijkl}(y)\big(y_3\GM^{\alpha\beta}_{ij}+e_{y,ij}(\chi^{\alpha\beta}_b)\big) e_{y,kl}(\wo{\Gv})\,dy=0
         \end{aligned}\right\},\quad \text{for all $\wo{\Gv}\in H^1_{per,0}(\Yc)^3$},\\
    \end{aligned}
    \end{equation*}
    The correctors $\Gu_p$ is the unique solution of the following 
    \begin{equation*}
        \int_{\Yc}\GA(y) e_y(\Gu_p) : e_y(\wo{\Gv})\,dy={1\over |\Yc|}\int_{\Yc^g}\alpha p_0\nabla_y\cdot \wo{\Gv}\,dy,\quad \text{for all $\wo{\Gv}\in H^1_{per,0}(\Yc)^3$}.
    \end{equation*}
    We set the homogenized coefficients in $\Yc$ as 
    \begin{equation}\label{HomC01}
    \begin{aligned}
        a^{hom}_{\alpha\beta\alpha'\beta'}&={1\over |\Yc|}\int_{\Yc}\GA_{ijkl}(y)\left[\GM^{\alpha\beta}_{ij}+e_{y,ij}(\chi^{\alpha\beta}_m)\right]\GM^{\alpha'\beta'}_{kl}\,dy,\\
        b^{hom}_{\alpha\beta\alpha'\beta'}&={1\over |\Yc|}\int_{\Yc}\GA_{ijkl}(y)\left[y_3\GM^{\alpha\beta}_{ij}+e_{y,ij}(\chi^{\alpha\beta}_m)\right]\GM^{\alpha'\beta'}_{kl}\,dy,\\
        c^{hom}_{\alpha\beta\alpha'\beta'}&={1\over |\Yc|}\int_{\Yc}\GA_{ijkl}(y)\left[y_3\GM^{\alpha\beta}_{ij}+e_{y,ij}(\chi^{\alpha\beta}_m)\right]y_3\GM^{\alpha'\beta'}_{kl}\,dy.
    \end{aligned}
    \end{equation}
    Then, using the above-homogenized coefficients and the expression of the microscopic displacement \eqref{MD01} along with equation \eqref{ML01}, we get new expression of \eqref{MUP}$_1$ as
    \begin{equation*}\label{HLP01}
        \int_{\o} \GA^{hom}E(\Wc):E(\Vc)\,dx'
        -\int_{\o\X \Yc}\GA e_y(\Gu_p):E(\Vc)\,dx'dy-{1\over |\Yc|}\int_{\o\X \Yc^g}\alpha p_0 \nabla \cdot \Vc_L\,dx'dy=\int_\o f\cdot \Vc,dx',
    \end{equation*}
    where $\GA^{hom}E(\Wc):E(\Vc)$ is given by \eqref{HomC02}.\\  
   Introducing the expression \eqref{MD01} of $\wo{\Gu}$ in \eqref{MUP}$_2$ give
   \begin{multline*}
       \int_{\o\X\Yc^g}\partial_t\big(p_0+\alpha\nabla\cdot\Wc_L\big)\phi\,dx'dy+\alpha\int_{\o\X \Yc^g}\big(\nabla_y\cdot\partial_t\Gu_d-\nabla_y\cdot\partial_t\Gu_p)\,dx'dy\\
       +\int_{\o\X\Yc^g}K\nabla_y p_0\cdot\nabla_y\phi\,dx'dy=\int_{\o\X \Yc^g}h\phi\,dx'dy.
   \end{multline*}
   As a consequence of the above expression, we get \eqref{Hom01} from \eqref{MUP} and the rest follows from the above steps.\\   
This completes the proof.
\end{proof}
\begin{remark}
    The convergences \eqref{IL02}$_{1,2}$ show that the limit displacement is of {\bf Kirchhoff-Love} type.
\end{remark}
\section*{Conclusion and future outlook}
In conclusion, this paper deals with simultaneous homogenization and dimension reduction of a microscopic model of a fiber-reinforced hydrogel. More precisely, the FIH consists of a periodic connected elastic fiber-scaffold and a disconnected hydrogel part model via Biot linear poro-elasticity. On the interface standard continuity of displacement, stress and flux is assumed. An asymptotically based homogenized limit model is derived when both the period and thickness are of the same order and tend to zero simultaneously. The limit model is the Biot-Kirchhoff-Love plate model, which satisfies the corresponding homogeneous boundary and initial conditions as the microscopic model.\\[1mm]
In reality, the fiber and hydrogel components are interconnected. However, in the current draft (where the hydrogel part is disconnected), we analyze the hydrogel part in each cell as a separate bounded domain with a Lipschitz boundary. When the hydrogel is connected, the boundary of the hydrogel part in each periodicity cell may not be Lipschitz. This complicates the process of deriving estimates and passing to the limit. Therefore, the case of both components being connected will be addressed in future work.

Additionally, in the current draft, we assume the periodicity and thickness are of the same order, but the thickness of the fiber-scaffold can be smaller than the size of an individual cell. For that, it would also be interesting to consider the thickness parameter as $0<\d$ with the assumption, $\ds\lim_{(\d,\e)\to(0,0)}\ds{\d\over \e}=0$, where $0<\e$ is the periodicity parameter.

\section*{Acknowledgment}
The first author would like to thank Prof.\ Dr.\ Georges Griso and Dr.\ Julia Orlik for their guidance and supervision. The second author gratefully acknowledges the PMRF (ID-2402788) scheme and IIT Kharagpur for funding his PhD position.
 
\bibliography{reference}
\appendix
\section{Korn-type inequality for unfolded fields}
\begin{lemma}
    For $(\Wc,\wo{\Gu})\in \D$, there exist $C>0$ such that
    \begin{equation}\label{UK01}
        \sum_{\alpha=1}^2\|\Uc_\alpha\|^2_{H^1(\o)}+\|\Uc_3\|^2_{H^2(\o)}+\|\wo{\Gu}\|^2_{L^2(\o; H^1(\Yc))}\leq C\|E(\Uc)+e_y(\wo{\Gu})\|^2_{L^2(\o\X \Yc)}.
    \end{equation}
\end{lemma}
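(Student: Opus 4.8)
The plan is to reduce \eqref{UK01} to three classical ingredients: the extraction of the in-plane strain $e_{\alpha\beta}(\Wc)$ and the Hessian $\partial_{\alpha\beta}\Wc_3$ from suitable $y$-averages of the right-hand side, the planar Korn--Poincar\'e inequality together with an elementary Poincar\'e chain on $\o$, and Korn's inequality on the cell $\Yc$ for displacements periodic in $y'$. Since the time variable plays no role, I regard $(\Wc,\wo{\Gu})$ as a spatial pair in $H^1(\o)^2\X H^2(\o)\X L^2(\o;H^1_{per,0}(\Yc))^3$ with $\Wc=\partial_\alpha\Wc_3=0$ on $\partial\o$, and I abbreviate the right-hand side field by $M:=E(\Wc)+e_y(\wo{\Gu})\in L^2(\o\X\Yc)^{3\X3}$.

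The key step is the moment extraction. For $\alpha,\beta\in\{1,2\}$ the entry $e_{y,\alpha\beta}(\wo{\Gu})=\tfrac12(\partial_{y_\alpha}\wo{\Gu}_\beta+\partial_{y_\beta}\wo{\Gu}_\alpha)$ contains only $y'$-derivatives, so by $Y$-periodicity $\int_Y e_{y,\alpha\beta}(\wo{\Gu})(x',y',y_3)\,dy'=0$ for every $y_3$, whence both $\int_{\Yc}e_{y,\alpha\beta}(\wo{\Gu})\,dy=0$ and $\int_{\Yc}y_3\,e_{y,\alpha\beta}(\wo{\Gu})\,dy=0$. Using $\int_{\Yc}dy=2$, $\int_{\Yc}y_3\,dy=0$, $\int_{\Yc}y_3^2\,dy=\tfrac23$, I then obtain for a.e.\ $x'\in\o$ and $\alpha,\beta\in\{1,2\}$
$$e_{\alpha\beta}(\Wc)(x')=\frac12\int_{\Yc}M_{\alpha\beta}(x',y)\,dy,\qquad \partial_{\alpha\beta}\Wc_3(x')=-\frac32\int_{\Yc}y_3\,M_{\alpha\beta}(x',y)\,dy,$$
and Cauchy--Schwarz in $y$ followed by integration over $\o$ gives $\sum_{\alpha,\beta}\big(\|e_{\alpha\beta}(\Wc)\|_{L^2(\o)}^2+\|\partial_{\alpha\beta}\Wc_3\|_{L^2(\o)}^2\big)\le C\|M\|_{L^2(\o\X\Yc)}^2$.

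Next I would recover the full norms. Since $\Wc_m=(\Wc_1,\Wc_2)$ vanishes on $\partial\o$ and $\o$ is bounded Lipschitz, the planar Korn inequality gives $\sum_\alpha\|\Wc_\alpha\|_{H^1(\o)}\le C\|e(\Wc_m)\|_{L^2(\o)}$; since $\Wc_3$ and $\nabla'\Wc_3$ both vanish on $\partial\o$, two applications of Poincar\'e's inequality give $\|\Wc_3\|_{L^2(\o)}\le C\|\nabla'\Wc_3\|_{L^2(\o)}\le C\|\nabla'\nabla'\Wc_3\|_{L^2(\o)}$, hence $\|\Wc_3\|_{H^2(\o)}\le C\|\nabla'\nabla'\Wc_3\|_{L^2(\o)}$. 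Combined with the moment bound, $\sum_\alpha\|\Wc_\alpha\|_{H^1(\o)}^2+\|\Wc_3\|_{H^2(\o)}^2\le C\|M\|_{L^2(\o\X\Yc)}^2$, and in particular $\|E(\Wc)\|_{L^2(\o\X\Yc)}\le C\|M\|_{L^2(\o\X\Yc)}$ because $E(\Wc)$ is affine in $e_{\alpha\beta}(\Wc)$ and $y_3\partial_{\alpha\beta}\Wc_3$. Then $e_y(\wo{\Gu})=M-E(\Wc)$ yields $\|e_y(\wo{\Gu})\|_{L^2(\o\X\Yc)}\le C\|M\|_{L^2(\o\X\Yc)}$. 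For a.e.\ fixed $x'$, $\wo{\Gu}(x',\cdot)\in H^1_{per,0}(\Yc)^3$ is $Y$-periodic in $y'$ with zero mean over $\Yc$, and the only rigid displacements periodic in $y_1,y_2$ are the constants, which the zero-mean condition excludes; hence Korn's inequality for periodic displacements gives $\|\wo{\Gu}(x',\cdot)\|_{H^1(\Yc)}\le C\|e_y(\wo{\Gu})(x',\cdot)\|_{L^2(\Yc)}$ with $C$ depending only on $\Yc$. Squaring, integrating over $\o$, and adding the three contributions completes the proof.

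The step I expect to be the crux is the moment extraction: the vanishing of $\int_{\Yc}e_{y,\alpha\beta}(\wo{\Gu})\,dy$ and $\int_{\Yc}y_3\,e_{y,\alpha\beta}(\wo{\Gu})\,dy$ is exactly what decouples the macroscopic plate field $\Wc$ from the cell warping $\wo{\Gu}$; after that, the argument is a routine assembly of standard Korn and Poincar\'e inequalities, the only mild point being to record that the cell-Korn constant can be chosen uniform in $x'\in\o$ so that the pointwise-in-$x'$ estimate integrates cleanly over $\o$.
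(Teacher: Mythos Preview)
Your argument is correct and complete. The moment extraction step is valid: for $\alpha,\beta\in\{1,2\}$ and any $\phi\in H^1_{per}(\Yc)$ one has $\int_{\Yc}\partial_{y_\alpha}\phi\,dy=0$ and $\int_{\Yc}y_3\,\partial_{y_\alpha}\phi\,dy=0$ by periodicity in $y'$ (integration by parts in $y_\alpha$ only, with $y_3$ inert), so your formulas for $e_{\alpha\beta}(\Wc)$ and $\partial_{\alpha\beta}\Wc_3$ follow. The remaining steps (planar Korn for $\Wc_m$, the Poincar\'e chain for $\Wc_3$, and cell Korn for $\wo{\Gu}$ with only constant rigid motions surviving periodicity and these killed by the zero mean) are routine and correctly invoked.

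The paper takes a different route. It does not separate the membrane/bending moments explicitly; instead it cites a pointwise-in-$x'$ norm equivalence on the space $\R^3\times\R^3\times H^1_{per,0}(\Yc)^3$ (Lemma~\ref{NEq1}, from \cite{Amartya}), proved by a compactness/contradiction argument, which asserts that $|\eta|^2+|\zeta|^2+\|w\|_{H^1(\Yc)}^2\le C\|\wt{\cal E}(\eta,\zeta,w)\|_{L^2(\Yc)}^2$. Applying this with $\eta=(e_{\alpha\beta}(\Wc))$, $\zeta=(\partial_{\alpha\beta}\Wc_3)$, $w=\wo{\Gu}(x',\cdot)$ gives exactly your intermediate bound in one stroke, and then the paper finishes with the same $2$D Korn inequality on $\o$. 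Your approach is more constructive and elementary (no compactness argument, explicit constants from moments), while the paper's packaging via the abstract norm equivalence is shorter to state and reusable for other limit functionals with the same $E(\Wc)+e_y(\wo{\Gu})$ structure.
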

The above inequality is a direct consequence of  $2$D-Korn's inequality and a norm equivalence from \cite{Amartya}, which states
\begin{lemma}[Lemma 16, \cite{Amartya}]\label{NEq1}
	Consider the space $\S=\R^3\X\R^3\X H^1_{per,0}(\Yc)^3$ with the semi-norm 
	$$	\|(\eta,\zeta,w)\|_\S = \sqrt{\sum_{i,j=1}^3  \|\wt{\cal E}_{ij}(\eta,\zeta,w)\|^2_{L^2( \Yc)}},
	$$ 
	where for every $(\eta,\zeta,w) \in \S$, we denote $\wt{\cal E}$ the symmetric matrix by
	$$
	\wt{\cal E}(\eta,\zeta,w)=
	\begin{pmatrix}
		\eta_1-y_3\zeta_1+ e_{11,y}( w)  & \eta_3 -y_3 \zeta_3 + e_{12,y}( w) 
		&  e_{13,y}( w)  \\
		* & \eta_2-y_3\zeta_2 + e_{22,y}( w)  &  e_{23,y}( w) \\
		* & *&    e_{33,y}( w) 
	\end{pmatrix}.
	$$ 
	Then, there exists constants $c,C>0$ such that for all $(\eta,\zeta,w) \in \S$ we have
	\begin{equation}\label{ENE1}
		c\left(\|\eta\|^2_2 + \|\zeta\|^2_2 + \|w\|^2_{H^1(\Yc)}\right) \leq  \|(\eta,\zeta,w)\|^2_\S
		\leq C \left(\|\eta\|^2_2 + \|\zeta\|^2_2 + \|w\|^2_{H^1(\Yc)}\right).
	\end{equation} 
\end{lemma}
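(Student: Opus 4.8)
The statement to be proved is the norm-equivalence \eqref{ENE1} on the space $\S=\R^3\X\R^3\X H^1_{per,0}(\Yc)^3$ with the semi-norm $\|(\eta,\zeta,w)\|_\S$ built from the matrix $\wt{\cal E}(\eta,\zeta,w)$. The upper bound is routine: each entry of $\wt{\cal E}(\eta,\zeta,w)$ is a sum of a bounded-coefficient multiple of a component of $\eta$ or $\zeta$ (since $y_3\in(-1,1)$, so $\|y_3\zeta_i\|_{L^2(\Yc)}\leq C|\zeta_i|$) and a first-order derivative of $w$. Hence $\|(\eta,\zeta,w)\|_\S^2\leq C(\|\eta\|_2^2+\|\zeta\|_2^2+\|e_y(w)\|_{L^2(\Yc)}^2)\leq C(\|\eta\|_2^2+\|\zeta\|_2^2+\|w\|_{H^1(\Yc)}^2)$, which gives the right-hand inequality directly.

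\textbf{Strategy for the lower bound.} The heart of the matter is the left-hand inequality. I would argue by contradiction/compactness. Suppose it fails; then there is a sequence $(\eta^n,\zeta^n,w^n)\in\S$ with $\|\eta^n\|_2^2+\|\zeta^n\|_2^2+\|w^n\|_{H^1(\Yc)}^2=1$ but $\|(\eta^n,\zeta^n,w^n)\|_\S\to 0$. Since $\R^3\X\R^3$ is finite-dimensional and $\{w^n\}$ is bounded in $H^1_{per,0}(\Yc)^3$, after extracting a subsequence we have $\eta^n\to\eta$, $\zeta^n\to\zeta$ in $\R^3$, and $w^n\rightharpoonup w$ weakly in $H^1(\Yc)^3$, strongly in $L^2(\Yc)^3$, with $w\in H^1_{per,0}(\Yc)^3$ (the mean-zero constraint and periodicity pass to the weak limit). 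The diagonal entries $e_{33,y}(w^n)$, $e_{ij,y}(w^n)$ for the purely-$w$ components converge weakly to the corresponding entries of $e_y(w)$, while the entries mixing $\eta,\zeta$ force, in the limit, the three relations $\eta_1-y_3\zeta_1+e_{11,y}(w)=0$, $\eta_2-y_3\zeta_2+e_{22,y}(w)=0$, $\eta_3-y_3\zeta_3+e_{12,y}(w)=0$, together with $e_{13,y}(w)=e_{23,y}(w)=e_{33,y}(w)=0$. Thus $e_y(w)$ is an affine-in-$y_3$ field whose $(1,3),(2,3),(3,3)$ components vanish; by a Korn-type rigidity argument on the cell $\Yc$ one deduces $w$ must be (up to the periodicity and mean-zero constraints) a rigid-type displacement, and the periodicity plus $\int_\Yc w=0$ forces $w=0$. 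Feeding $e_y(w)=0$ back into the three relations gives $\eta_i=y_3\zeta_i$ as functions of $y_3$, which can only hold with $\eta=\zeta=0$. Hence $(\eta,\zeta,w)=(0,0,0)$.

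\textbf{Closing the contradiction.} It remains to upgrade the weak convergence of $w^n$ to strong $H^1$-convergence so that the normalization $\|\eta^n\|_2^2+\|\zeta^n\|_2^2+\|w^n\|_{H^1}^2=1$ is contradicted. From $\|(\eta^n,\zeta^n,w^n)\|_\S\to 0$ and $\eta^n\to 0$, $\zeta^n\to 0$ we get $e_{ij,y}(w^n)\to 0$ in $L^2(\Yc)$ for every $i,j$ (the purely-$w$ entries directly, the mixed entries because the $\eta^n,\zeta^n$ contributions vanish). Then the second Korn inequality on $H^1_{per,0}(\Yc)^3$ — valid because quotienting by rigid displacements is controlled by the mean-zero and periodicity constraints — yields $\|w^n\|_{H^1(\Yc)}\to 0$, so the whole normalized quantity tends to $0$, contradicting that it equals $1$. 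This proves the left-hand inequality of \eqref{ENE1} for some constant $c>0$.

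\textbf{Main obstacle.} The delicate point is the rigidity/Korn step: one must know that on $H^1_{per,0}(\Yc)^3$ the symmetric gradient controls the full $H^1$-norm (equivalently, the only periodic mean-zero displacement with zero symmetrized gradient is $0$, since the admissible rigid displacements compatible with $\Yc$-periodicity are constants, which are killed by the mean-zero condition). This is exactly a second Korn inequality on the fixed reference cell and is where the periodicity and normalization hypotheses are genuinely used; everything else is bookkeeping with the explicit entries of $\wt{\cal E}$ and elementary compactness in $\R^3\times\R^3$. Since this lemma is cited verbatim from \cite{Amartya} (Lemma 16 there), I would in fact simply invoke that reference for the full argument, noting that the proof is the compactness/Korn scheme sketched above.
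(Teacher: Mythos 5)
Your overall architecture (trivial upper bound; compactness/contradiction for the lower bound; a second Korn inequality on $H^1_{per,0}(\Yc)^3$ to upgrade weak to strong convergence and close the contradiction) is the same as the paper's, which likewise reduces everything to the injectivity of the semi-norm and then invokes the contradiction method of \cite{Amartya}. The one genuine gap sits exactly at that injectivity step, i.e.\ in identifying the weak limit as zero. From the limit relations $\eta_1-y_3\zeta_1+e_{11,y}(w)=0$, $\eta_2-y_3\zeta_2+e_{22,y}(w)=0$, $\eta_3-y_3\zeta_3+e_{12,y}(w)=0$ and $e_{13,y}(w)=e_{23,y}(w)=e_{33,y}(w)=0$, you assert that ``$w$ must be a rigid-type displacement'' and then ``feed $e_y(w)=0$ back'' to kill $\eta$ and $\zeta$. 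This is backwards: those relations say that $e_y(w)$ equals a specific \emph{nonzero} matrix field determined by $(\eta,\zeta)$, and a displacement whose $(1,3)$, $(2,3)$, $(3,3)$ strain components vanish while the in-plane components do not is not rigid, so no Korn-type rigidity applies to $w$ alone. You cannot conclude $e_y(w)=0$ before knowing $\eta=\zeta=0$, and you cannot conclude $\eta=\zeta=0$ from an $e_y(w)=0$ that you have not established; as written the step is circular.

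Two standard repairs exist. The paper's: exhibit the explicit polynomial field $\tau$ with $e_y(\tau)$ equal to the $(\eta,\zeta)$-part of $\wt{\cal E}$, so that $\wt{\cal E}(\eta,\zeta,w)=e_y(\tau+w)$; conclude $\tau+w=a+b\wedge y$ is rigid; then use the in-plane periodicity of $w$ (while $\tau$ and $b\wedge y$ are polynomials in $y_1,y_2$ and hence not periodic) together with the mean-zero condition to force all coefficients, hence $\eta$, $\zeta$, $a$, $b$ and finally $w$, to vanish. Alternatively, integrate the $(1,1)$, $(2,2)$ and $(1,2)$ relations over $\Yc$ against $1$ and against $y_3$: by the $Y$-periodicity of $w$ in $y_1,y_2$, the integrals of $\partial_{y_1}w_1$, $\partial_{y_2}w_2$, $\partial_{y_2}w_1+\partial_{y_1}w_2$ (and of their products with $y_3$) all vanish, and since $1$ and $y_3$ are linearly independent on $(-1,1)$ this gives $\eta=\zeta=0$ directly; only then does $e_y(w)=0$ follow, and periodicity plus mean zero give $w=0$. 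With either repair inserted, the remainder of your argument (finite-dimensional compactness, passage of the linear constraints to the weak limit, and the final Korn upgrade) is correct and consistent with the paper.
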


\end{document}